\numberwithin{equation}{section}
\begin{document}
\theoremstyle{plain}
\newtheorem{thm}{Theorem}[section]
\newtheorem{theorem}[thm]{Theorem}
\newtheorem{lemma}[thm]{Lemma}
\newtheorem{corollary}[thm]{Corollary}
\newtheorem{proposition}[thm]{Proposition}
\newtheorem{conjecture}[thm]{Conjecture}
\theoremstyle{definition}
\newtheorem{remark}[thm]{Remark}
\newtheorem{remarks}[thm]{Remarks}
\newtheorem{definition}[thm]{Definition}
\newtheorem{example}[thm]{Example}

\newcommand{\caA}{{\mathcal A}}
\newcommand{\caB}{{\mathcal B}}
\newcommand{\caC}{{\mathcal C}}
\newcommand{\caD}{{\mathcal D}}
\newcommand{\caE}{{\mathcal E}}
\newcommand{\caF}{{\mathcal F}}
\newcommand{\caG}{{\mathcal G}}
\newcommand{\caH}{{\mathcal H}}
\newcommand{\caI}{{\mathcal I}}
\newcommand{\caJ}{{\mathcal J}}
\newcommand{\caK}{{\mathcal K}}
\newcommand{\caL}{{\mathcal L}}
\newcommand{\caM}{{\mathcal M}}
\newcommand{\caN}{{\mathcal N}}
\newcommand{\caO}{{\mathcal O}}
\newcommand{\caP}{{\mathcal P}}
\newcommand{\caQ}{{\mathcal Q}}
\newcommand{\caR}{{\mathcal R}}
\newcommand{\caS}{{\mathcal S}}
\newcommand{\caT}{{\mathcal T}}
\newcommand{\caU}{{\mathcal U}}
\newcommand{\caV}{{\mathcal V}}
\newcommand{\caW}{{\mathcal W}}
\newcommand{\caX}{{\mathcal X}}
\newcommand{\caY}{{\mathcal Y}}
\newcommand{\caZ}{{\mathcal Z}}
\newcommand{\fA}{{\mathfrak A}}
\newcommand{\fB}{{\mathfrak B}}
\newcommand{\fC}{{\mathfrak C}}
\newcommand{\fD}{{\mathfrak D}}
\newcommand{\fE}{{\mathfrak E}}
\newcommand{\fF}{{\mathfrak F}}
\newcommand{\fG}{{\mathfrak G}}
\newcommand{\fH}{{\mathfrak H}}
\newcommand{\fI}{{\mathfrak I}}
\newcommand{\fJ}{{\mathfrak J}}
\newcommand{\fK}{{\mathfrak K}}
\newcommand{\fL}{{\mathfrak L}}
\newcommand{\fM}{{\mathfrak M}}
\newcommand{\fN}{{\mathfrak N}}
\newcommand{\fO}{{\mathfrak O}}
\newcommand{\fP}{{\mathfrak P}}
\newcommand{\fQ}{{\mathfrak Q}}
\newcommand{\fR}{{\mathfrak R}}
\newcommand{\fS}{{\mathfrak S}}
\newcommand{\fT}{{\mathfrak T}}
\newcommand{\fU}{{\mathfrak U}}
\newcommand{\fV}{{\mathfrak V}}
\newcommand{\fW}{{\mathfrak W}}
\newcommand{\fX}{{\mathfrak X}}
\newcommand{\fY}{{\mathfrak Y}}
\newcommand{\fZ}{{\mathfrak Z}}

\newcommand{\bA}{{\mathbb A}}
\newcommand{\bB}{{\mathbb B}}
\newcommand{\bC}{{\mathbb C}}
\newcommand{\bD}{{\mathbb D}}
\newcommand{\bE}{{\mathbb E}}
\newcommand{\bF}{{\mathbb F}}
\newcommand{\bG}{{\mathbb G}}
\newcommand{\bH}{{\mathbb H}}
\newcommand{\bI}{{\mathbb I}}
\newcommand{\bJ}{{\mathbb J}}
\newcommand{\bK}{{\mathbb K}}
\newcommand{\bL}{{\mathbb L}}
\newcommand{\bM}{{\mathbb M}}
\newcommand{\bN}{{\mathbb N}}
\newcommand{\bO}{{\mathbb O}}
\newcommand{\bP}{{\mathbb P}}
\newcommand{\bQ}{{\mathbb Q}}
\newcommand{\bR}{{\mathbb R}}
\newcommand{\bT}{{\mathbb T}}
\newcommand{\bU}{{\mathbb U}}
\newcommand{\bV}{{\mathbb V}}
\newcommand{\bW}{{\mathbb W}}
\newcommand{\bX}{{\mathbb X}}
\newcommand{\bY}{{\mathbb Y}}
\newcommand{\bZ}{{\mathbb Z}}
\newcommand{\id}{{\rm id}}

\title[A new approach to Lickorish-Millett type formulae]
{A new approach to Lickorish-Millett type formulae}
\author[Xin Zhou, Shengmao Zhu]{Xin Zhou, Shengmao Zhu}
\address{Center of Mathematical Sciences \\Zhejiang University \\Hangzhou, 310027, China }
\email{risingsun.up@gmail.com, shengmaozhu@126.com}

\begin{abstract}
In this paper, we introduce a new method to prove the
Lickorish-Millett type formulae for colored HOMFLY-PT polynomials of
links.
\end{abstract}

\maketitle



\section{Introduction}
The HOMFLY-PT polynomial is a two variables link invariant that was
first discovered by Freyd-Yetter, Lickorish-Millett, Ocneanu, Hoste
and Przytycki-Traczyk. Given an oriented link $\mathcal{L}$ in
$S^3$, its HOMFLY-PT polynomial $P(\mathcal{L},q,t)$ satisfies the
following skein relation,
\begin{align}
tP(\mathcal{L}_+;
q,t)-t^{-1}P(\mathcal{L}_-;q,t)=(q-q^{-1})P(\mathcal{L}_0; q,t)
\end{align}
where we will use the notation $(\mathcal{L}_+,
\mathcal{L}_-,\mathcal{L}_0)$ to denote the Conway triple throughout
this paper. Given an initial value $P(U; q,t)=1$ for an unknot $U$,
one can compute the HOMFLY-PT polynomial for a given oriented link
recursively through the above formula (1.1). We can give the
definition of the HOMFLY-PT polynomial through the HOMFLY-PT skein
of the plane $\mathcal{S}(\mathbb{R}^2)$. Any link diagram of a link
$\mathcal{L}$ in $\mathcal{S}(\mathbb{R}^2)$ will be equal to a
scalar denoted by $\mathcal{H}(\mathcal{L};q,t)$. In particular, for
an unknot $U$, $\mathcal{H}(U;q,t)=\frac{t-t^{-1}}{q-q^{-1}}$.  Then
the HOMFLY-PT polynomial of $\mathcal{L}$ is defined by
\begin{align}
P(\mathcal{L};q,t)=\frac{t^{-w(\mathcal{L})}\mathcal{H}(\mathcal{L};q,t)}{\mathcal{H}(U;q,t)},
\end{align}
where $w(\mathcal{L})$ is the writhe number of $\mathcal{L}$. In
Section 2, by a simple observation, we obtain the following
structural theorem for HOMFLY-PT polynomials first showed by
Lickorish-Millett in \cite{LM}.
\begin{proposition}
For a link $\mathcal{L}$ with $L$ components, we have the following
expansions:
\begin{align}
\mathcal{H}(\mathcal{L};q,t)=\sum_{g\geq 0}h_{2g-L}^{\mathcal{L}}(t)z^{2g-L},\\
P(\mathcal{L};q,t)=\sum_{g\geq
0}p_{2g+1-L}^{\mathcal{L}}(t)z^{2g+1-L},
\end{align}
where $z=q-q^{-1}$. The polynomials $h_{2g-L}^{\mathcal{L}}(t)$ and
$p_{2g+1-L}^{\mathcal{L}}(t)$ will be called coefficient
polynomials. According to the formula (1.2), it clear that we have
the relations
\begin{align}
h_{2g-L}^{\mathcal{L}}(t)=p_{2g+1-L}^{\mathcal{L}}(t)t^{w(\mathcal{L})}(t-t^{-1}),
\end{align}
for $g\geq 0$.
\end{proposition}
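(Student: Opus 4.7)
The plan is to establish the expansion (1.3) for $\mathcal{H}$ directly; then (1.4) will follow from the definition (1.2), since multiplication by $t^{-w(\mathcal{L})}/\mathcal{H}(U)=t^{-w(\mathcal{L})}z/(t-t^{-1})$ shifts every $z$-exponent from $2g-L$ to $2g+1-L$, and (1.5) is an immediate rearrangement of the same identity.

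The first observation I would use is that, at the unnormalized level in the plane skein $\mathcal{S}(\mathbb{R}^2)$, the HOMFLY-PT skein relation takes the simpler form
\begin{align*}
\mathcal{H}(\mathcal{L}_+;q,t)-\mathcal{H}(\mathcal{L}_-;q,t)=z\,\mathcal{H}(\mathcal{L}_0;q,t),
\end{align*}
while each Reidemeister~I curl contributes a framing factor $t^{\pm 1}$. The crucial combinatorial point is that $\mathcal{L}_+$ and $\mathcal{L}_-$ share the same number of components $L$, whereas $\mathcal{L}_0$ has $L\mp 1$ components according as the resolved crossing is a self-crossing or a mixed crossing, so the parity of the component count flips between the two sides.

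I would then prove (1.3) by induction. Fix a diagram $D$ for $\mathcal{L}$ with $n$ crossings and let $k(D)$ be the minimum number of crossing changes needed to convert $D$ into a descending diagram. Induct on $(n,k)$ in lexicographic order. When $k=0$, $D$ is descending and $\mathcal{L}$ is an unlink of $L$ components, so iterating the framing and multiplicativity relations in $\mathcal{S}(\mathbb{R}^2)$ yields $\mathcal{H}(\mathcal{L};q,t)=t^{w(D)}(t-t^{-1})^L z^{-L}$, which matches (1.3) with only the $g=0$ summand present. When $k\geq 1$, choose a crossing whose change strictly decreases $k$ and apply the skein relation to express $\mathcal{H}(\mathcal{L})$ as a $\mathbb{Z}[t^{\pm 1}]$-combination of $\mathcal{H}(\mathcal{L}')$, which has the same $n$ but smaller $k$, and $z\,\mathcal{H}(\mathcal{L}_0)$, which has strictly smaller $n$. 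By induction, $\mathcal{H}(\mathcal{L}')$ is supported on $z$-exponents $\geq -L$ of parity $-L\pmod 2$, and $\mathcal{H}(\mathcal{L}_0)$ is supported on $z$-exponents $\geq -(L\pm 1)$ of parity $-L\mp 1\pmod 2$; multiplication of the latter by $z$ raises the minimum exponent to $\geq -L$ and restores the parity to $-L\pmod 2$. Both summands therefore satisfy (1.3), and so does $\mathcal{H}(\mathcal{L})$.

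The main obstacle is the simultaneous parity and minimum-degree bookkeeping in the inductive step; it rests entirely on the elementary fact that a smoothing changes the component count by exactly $\pm 1$, so that the factor of $z$ in the skein relation compensates for both the parity flip and the potential loss of minimum degree. Everything else is routine: the base case is a direct skein computation on a descending diagram, and (1.4) and (1.5) are formal consequences of (1.2).
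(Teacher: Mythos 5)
Your argument is correct and is essentially the paper's own: the paper packages exactly your parity and lowest-degree bookkeeping by rescaling to $\check{\mathcal{H}}(\mathcal{L};q,t)=z^{L}\mathcal{H}(\mathcal{L};q,t)$, for which the skein relation becomes $\check{\mathcal{H}}(\mathcal{L}_+)-\check{\mathcal{H}}(\mathcal{L}_-)=z^{2\epsilon}\check{\mathcal{H}}(\mathcal{L}_0)$ with $\epsilon\in\{0,1\}$ according to the type of crossing, and then runs the same recursion on diagrams to conclude $\check{\mathcal{H}}\in\mathbb{Z}[z^{2},t^{\pm1}]$, with (1.4) and (1.5) again formal consequences of (1.2). (One cosmetic slip: smoothing a self-crossing yields $L+1$ components and a mixed crossing yields $L-1$, the reverse of your ``$L\mp1$'' labeling, but since your inductive step treats both signs symmetrically nothing in the argument breaks.)
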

Furthermore, Lickorish-Millett \cite{LM}  first studied the explicit
expressions of the coefficient polynomials
$p_{2g+1-L}^{\mathcal{L}}(t)$ appearing in the above expansions
(1.4). They studied the first coefficient $p_{1-L}^{\mathcal{L}}(t)$
for a link $\mathcal{L}$ with $L$ components and obtained the
following Theorem 1.4. Then in \cite{KM}, Kanenobu-Miyazawa
carefully computed the second and third coefficients
$p_{3-L}^{\mathcal{L}}(t)$, $p_{5-L}^{\mathcal{L}}(t)$ by using the
similar method as in \cite{LM}. All these formulas about the
coefficient polynomials $p_{2g+1-L}^{\mathcal{L}}(t)$ will be called
the "Lickorish-Millett" type formulas. In this paper, we introduce a
completely new method to study these coefficient polynomials
$p_{2g+1-L}^{\mathcal{L}}(t)$. This method is motivated by the work
of \cite{LP} in the proof of Labastida-Mari\~no-Ooguri-Vafa (LMOV)
conjecture.

Let us fix some notation. For a link $\mathcal{L}$ with $L$
components, let $\mathbf{I}$ be a subset of
$\mathbf{L}=\{1,2,...,L\}$. Denote by $\mathcal{L}_{\mathbf{I}}$ the
sub-link obtained by removing the components whose sub-indices are
not contained in $\mathbf{I}$. For example, if $\mathcal{L}$ is a
link of two components $\mathcal{K}_1$ and $\mathcal{K}_2$, then
$\mathcal{L}_{\{2\}}=\mathcal{K}_2$. For $L\geq 1$ and $l\leq L$, we
will use the notation $\cup_{s=1}^{l}\mathbf{I}_{s}=\mathbf{L}$ to
denote a nonempty disjoint ordered decomposition of the set
$\mathbf{L}=\{1,..,L\}$, i.e. every $\mathbf{I}_s\subset
\mathbf{L}$, $\mathbf{I}_s \neq \emptyset$ and $\mathbf{I}_s\cap
\mathbf{I}_t=\emptyset$ for $s\neq t$ such that $\mathbf{I}_{1}\cup
\mathbf{I}_2\cup \cdots \cup \mathbf{I}_l=\mathbf{L}$, and different
orders of $\mathbf{I}_{1}, \mathbf{I}_2, \cdots ,\mathbf{I}_l$ give
different decompositions.

 We introduce the intermediate
invariant $\mathcal{F}(\mathcal{L};q,t)$  as follow:
\begin{align}
\mathcal{F}(\mathcal{L};q,t)=\sum_{l=1}^L\frac{(-1)^{l-1}}{l}
\sum_{\cup_{s=1}^l\mathbf{I}_s=\mathbf{L}}
\prod_{s=1}^l\mathcal{H}(\mathcal{L}_{\mathbf{I}_s};q,t),
\end{align}
where the summation $\sum_{\cup_{s=1}^l\mathbf{I}_s=\mathbf{L}}$
denotes the sum over all nonempty disjoint ordered decompositions
$\cup_{s=1}^l\mathbf{I}_s=\mathbf{L}$ throughout this paper.

In Section 3, we will prove that
\begin{proposition}
\begin{align}
\deg_{z} \mathcal{F}(\mathcal{L};q,t)\geq L-2.
\end{align}
where we use the notation $\deg_z f$ to denote the lowest degree of
$z$ in a polynomial $f\in z^{-L}\mathbb{Q}[z^2,t^{\pm 1}]$.
\end{proposition}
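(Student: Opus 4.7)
The plan is to derive a HOMFLY-PT-style skein relation for the intermediate invariant $\caF$ at inter-component crossings, to observe that $\caF$ vanishes on every split link, and then to induct on the number of components $L$.

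The central observation is the following skein identity for $\caF$. Let $(\caL_+, \caL_-, \caL_0)$ be a Conway triple whose distinguished crossing lies between two \emph{distinct} components $i, j$ of $\caL_\pm$, so that $\caL_0$ has $L-1$ components. Combining $(1.1)$ and $(1.2)$ with the writhe shifts $w(\caL_-) = w(\caL_+) - 2$ and $w(\caL_0) = w(\caL_+) - 1$ yields the skein relation $\caH(\caL_+) - \caH(\caL_-) = z\,\caH(\caL_0)$ for $\caH$. In any ordered decomposition $\bigcup_{s=1}^{l}\mathbf{I}_s = \mathbf{L}$ appearing in $(1.6)$, the decompositions with $i, j$ in different blocks contribute zero to $\caF(\caL_+) - \caF(\caL_-)$, while in the remaining decompositions $\{i,j\}$ lies in a unique block $\mathbf{I}_{s_0}$ and the difference collapses to $z\,\caH(\caL_{\mathbf{I}_{s_0}, 0}) \prod_{s \neq s_0} \caH(\caL_{\mathbf{I}_s})$. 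The natural bijection between ordered decompositions of $\mathbf{L}$ with $\{i, j\}$ in a single block and ordered decompositions of the reduced label set $\mathbf{L}' := (\mathbf{L} \setminus \{i, j\}) \cup \{ij\}$ of $\caL_0$ reorganizes the resulting sum into $z\,\caF(\caL_0)$, yielding
\begin{equation*}
  \caF(\caL_+; q, t) - \caF(\caL_-; q, t) \;=\; z\,\caF(\caL_0; q, t).
\end{equation*}

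The second ingredient is that $\caF(\caL) = 0$ on every split link. If $\caL = \caL^{(1)} \sqcup \caL^{(2)}$ splits along a nontrivial partition $\mathbf{L} = \mathbf{J}_1 \sqcup \mathbf{J}_2$, then multiplicativity of $\caH$ under disjoint union gives $\caH(\caL_\mathbf{I}) = \caH(\caL^{(1)}_{\mathbf{I} \cap \mathbf{J}_1})\,\caH(\caL^{(2)}_{\mathbf{I} \cap \mathbf{J}_2})$ for every $\mathbf{I} \subseteq \mathbf{L}$. Introducing auxiliary variables $x_1, \ldots, x_L$ and the generating series $\Psi(\caL; x) := 1 + \sum_{\emptyset \neq \mathbf{I}} \caH(\caL_\mathbf{I}) \prod_{i \in \mathbf{I}} x_i$, this yields the factorization $\Psi(\caL; x) = \Psi(\caL^{(1)}; x_{\mathbf{J}_1})\,\Psi(\caL^{(2)}; x_{\mathbf{J}_2})$; hence $\log \Psi(\caL; x)$ is a sum of two pieces involving disjoint sets of variables. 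A direct expansion of the logarithm identifies $\caF(\caL)$ with the coefficient of $x_1 x_2 \cdots x_L$ in $\log \Psi(\caL; x)$, which must therefore vanish.

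With these two facts in hand, the bound follows by induction on $L$. The base case $L = 1$ is immediate from $\caF(\caL) = \caH(\caL)$ and $(1.3)$. For $L \geq 2$, any diagram of $\caL$ can be turned into a split diagram through a finite sequence of crossing changes at inter-component crossings; iterating the skein relation for $\caF$ along this sequence expresses $\caF(\caL)$ as a $\bZ$-linear combination of a single $\caF(\caL')$ for a split link $\caL'$ (which vanishes by the previous step) and finitely many terms of the form $z\,\caF(\caM)$, where each $\caM$ is an $(L-1)$-component link. By the inductive hypothesis $\deg_z \caF(\caM) \geq L - 3$, so $\deg_z \bigl(z\,\caF(\caM)\bigr) \geq L - 2$, completing the induction. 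The main obstacle is the skein identity for $\caF$: the critical point is the bijection between ordered decompositions of $\mathbf{L}$ respecting the pair $\{i, j\}$ and those of $\mathbf{L}'$; once this is in place the remaining arguments are straightforward book-keeping.
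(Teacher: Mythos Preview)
Your proof is correct and follows the same overall architecture as the paper's: derive the skein relation $\caF(\caL_+)-\caF(\caL_-)=z\,\caF(\caL_0)$ at inter-component crossings, use it to reduce to a split link on which $\caF$ vanishes, and close the induction on $L$. The one genuine difference is in how the vanishing is obtained. The paper unlinks \emph{all} components, so that every $\caH(\caL_{\mathbf I_s})$ factors into knot invariants, and then invokes the explicit combinatorial identity $\sum_{l=1}^{L}\frac{(-1)^{l-1}}{l}\sum_{\cup_{s=1}^{l}\mathbf I_s=\mathbf L}1=0$ (their Lemma~5.1). Your generating-function argument instead identifies $\caF(\caL)$ as the coefficient of $x_1\cdots x_L$ in $\log\Psi(\caL;x)$, from which vanishing on \emph{any} nontrivially split link is immediate via the factorisation $\Psi=\Psi^{(1)}\Psi^{(2)}$. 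This is a cleaner and slightly more general statement (you only need to split off one component rather than all of them), and it explains conceptually \emph{why} the paper's combinatorial identity holds: it is just the vanishing of the $x_1\cdots x_L$-coefficient in $\log(1+x_1)+\cdots+\log(1+x_L)$ for $L\ge 2$. The cost is that your argument implicitly relies on the convention $\caH(\emptyset)=1$ for the empty sublink in order for $\Psi$ to factor; this is stated in the paper but worth flagging.
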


Combining the expansion formula (1.3) for
$\mathcal{H}(\mathcal{L};q,t)$ and Proposition 1.2, we immediately
have
\begin{theorem}
For a link $\mathcal{L}$ with $L$ components, the coefficient
polynomials $h_{2g-L}^{\mathcal{L}}(t)$ for $g=0,..,L-2$ can be
expressed as follow:
\begin{align}
h_{2g-L}^{\mathcal{L}}(t)=\sum_{l=2}^{L}\frac{(-1)^{l}}{l}\sum_{
\sum_{s=1}^lg_s=g}\sum_{\cup_{s=1}^l
\mathbf{I}_s=\mathbf{L}}\prod_{s=1}^l
h_{2g_s-I_s}^{\mathcal{L}_{\mathbf{I}_s}}(t).
\end{align}
where the second summation $\sum_{\sum_{s=1}^lg_s=g}$ denotes the
sum over all nonnegative integers $g_1,g_2...,g_l$ such that
$\sum_{s=1}^lg_s=g$.
\end{theorem}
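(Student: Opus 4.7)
The plan is to rewrite the definition (1.6) of $\mathcal{F}$ so that $\mathcal{H}(\mathcal{L};q,t)$ is isolated on one side, then expand the remaining products in powers of $z$ via Proposition 1.1, and finally extract coefficients using the degree bound of Proposition 1.2.

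First I would separate out the $l=1$ term in (1.6), obtaining
\begin{align*}
\mathcal{H}(\mathcal{L};q,t) \;=\; \mathcal{F}(\mathcal{L};q,t) \;+\; \sum_{l=2}^{L}\frac{(-1)^{l}}{l}\sum_{\cup_{s=1}^l\mathbf{I}_s=\mathbf{L}}\prod_{s=1}^l\mathcal{H}(\mathcal{L}_{\mathbf{I}_s};q,t).
\end{align*}
For every $s$, set $I_s=|\mathbf{I}_s|$ and apply the expansion (1.3) to the sub-link $\mathcal{L}_{\mathbf{I}_s}$: one has $\mathcal{H}(\mathcal{L}_{\mathbf{I}_s};q,t)=\sum_{g_s\ge 0}h_{2g_s-I_s}^{\mathcal{L}_{\mathbf{I}_s}}(t)\,z^{2g_s-I_s}$. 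Since for any ordered disjoint decomposition $\cup_{s=1}^l\mathbf{I}_s=\mathbf{L}$ we have $\sum_s I_s=L$, the product of $l$ such series contributes the total $z$-power $2\sum_s g_s-L$. Collecting terms by $g=\sum_s g_s$, the coefficient of $z^{2g-L}$ inside the bracketed sum above is precisely the right-hand side of (1.8).

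Next I invoke Proposition 1.2. The bound $\deg_z\mathcal{F}(\mathcal{L};q,t)\ge L-2$ means that $[z^{2g-L}]\mathcal{F}(\mathcal{L};q,t)=0$ whenever $2g-L<L-2$, i.e.\ for $g\le L-2$. Comparing the coefficient of $z^{2g-L}$ on both sides of the rearranged identity, the $\mathcal{F}$-contribution drops out, and the $\mathcal{H}(\mathcal{L};q,t)$ side yields $h_{2g-L}^{\mathcal{L}}(t)$ by (1.3). This delivers formula (1.8) exactly in the stated range $g=0,1,\dots,L-2$.

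The argument is essentially bookkeeping, so there is no serious obstacle once Proposition 1.2 is granted; the only point requiring care is indexing. One must verify that the expansion (1.3) is legitimately applied to each sub-link with its own number of components $I_s$, that the ordered disjoint decompositions $\cup_s\mathbf{I}_s=\mathbf{L}$ in the definition of $\mathcal{F}$ are the same objects as those appearing in the statement of the theorem, and that the signs $(-1)^{l-1}/l$ in (1.6) correctly flip to $(-1)^l/l$ after moving the $l\ge 2$ terms to the other side. All the real work is concentrated in Proposition 1.2; Theorem 1.3 is then a direct corollary via coefficient comparison.
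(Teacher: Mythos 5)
Your proposal is correct and is essentially the paper's own argument: Section 4 substitutes the expansions (1.3) of each $\mathcal{H}(\mathcal{L}_{\mathbf{I}_s};q,t)$ into the definition of $\mathcal{F}$, reads off the coefficient $f_{2g-L}(t)$ of $z^{2g-L}$, and uses Proposition 1.2 to conclude $f_{-L}=f_{2-L}=\cdots=f_{L-4}=0$, which after isolating the $l=1$ term (with the sign flip from $(-1)^{l-1}/l$ to $(-1)^{l}/l$) is exactly your coefficient comparison. No differences of substance.
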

In fact, by using the relation (1.5), Theorem 1.3 directly gives us
$L-1$ Lickorish-Millett type formulas for the coefficient
polynomials $p_{2g+1-L}^{\mathcal{L}}(t)$, $g=0,1,...,L-2$.

For examples, when $g=0$, we reprove the following result due to
Lickorish-Millett ( See Proposition 22 in \cite{LM}).
\begin{theorem}
The first coefficient $p_{1-L}^{\caL}(t)$ in the expansion
\begin{align}
P(\mathcal{L};q,t)=\sum_{g\geq 0}p_{2g+1-L}^{\caL}(t)z^{2g+1-L}
\end{align}
is given by
\begin{align}
p_{1-L}^{\caL}(t)=t^{-2lk(\caL)}(t-t^{-1})^{L-1}\prod_{\alpha=1}^{L}p_{0}^{\caK_{\alpha}}(t),
\end{align}
where $lk(\caL)$ is the linking number of $\mathcal{L}$.
\end{theorem}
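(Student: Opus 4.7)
The plan is to apply Theorem 1.3 at $g=0$ and argue by induction on the number of components $L$. The base case $L=1$ is a tautology. For the inductive step, since every $g_s \geq 0$ and $\sum_s g_s = 0$, each $g_s$ must vanish, and formula (1.8) at $g=0$ becomes
\begin{equation*}
h_{-L}^{\caL}(t) = \sum_{l=2}^L \frac{(-1)^l}{l} \sum_{\cup_{s=1}^l \mathbf{I}_s = \mathbf{L}} \prod_{s=1}^l h_{-I_s}^{\caL_{\mathbf{I}_s}}(t).
\end{equation*}

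First I would reformulate the target. Combining (1.5) with the standard writhe identity $w(\caL) = \sum_{\alpha} w(\caK_\alpha) + 2\,lk(\caL)$, the desired formula for $p_{1-L}^{\caL}(t)$ is equivalent to the cleaner multiplicative statement
\begin{equation*}
h_{-L}^{\caL}(t) = \prod_{\alpha=1}^L h_{-1}^{\caK_\alpha}(t),
\end{equation*}
which is what I would actually prove by induction. Since $l \geq 2$ in the sum above, every $\mathbf{I}_s$ is a proper subset of $\mathbf{L}$, so the inductive hypothesis applies and gives $h_{-I_s}^{\caL_{\mathbf{I}_s}}(t) = \prod_{\alpha \in \mathbf{I}_s} h_{-1}^{\caK_\alpha}(t)$. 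Because the blocks $\mathbf{I}_s$ partition $\mathbf{L}$, the resulting double product telescopes to $\prod_{\alpha=1}^L h_{-1}^{\caK_\alpha}(t)$, independent of the choice of partition.

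Factoring this product out reduces the induction step to a purely combinatorial identity. Writing $S(L,l)$ for the Stirling numbers of the second kind, the number of ordered partitions of $\mathbf{L}$ into $l$ nonempty blocks is $l!\,S(L,l)$, so what remains to verify is
\begin{equation*}
\sum_{l=2}^L \frac{(-1)^l}{l}\,l!\,S(L,l) = 1 \qquad (L \geq 2).
\end{equation*}
I expect this to be the main technical point. It falls out of the exponential generating function identity
\begin{equation*}
\sum_{l \geq 1} \frac{(-1)^{l-1}}{l}(e^x - 1)^l = \log(e^x) = x;
\end{equation*}
extracting $[x^L/L!]$ gives $\sum_{l=1}^L (-1)^{l-1}(l-1)!\,S(L,l) = 0$ for $L \geq 2$, and isolating the $l=1$ term (which equals $S(L,1) = 1$) yields the claim.

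Having established the multiplicative identity for $h_{-L}^{\caL}(t)$, the statement of Theorem 1.4 follows by applying (1.5) on both sides, using $h_{-1}^{\caK_\alpha}(t) = p_0^{\caK_\alpha}(t)\,t^{w(\caK_\alpha)}(t-t^{-1})$, and absorbing the writhe exponents via $w(\caL) - \sum_\alpha w(\caK_\alpha) = 2\,lk(\caL)$. The only substantive step is the Stirling-number identity; the remainder is bookkeeping between the $h$- and $p$-coefficients.
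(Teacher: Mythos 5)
Your proposal is correct and follows essentially the same route as the paper: the paper likewise specializes Theorem 1.3 at $g=0$, proves $h_{-L}^{\mathcal{L}}(t)=\prod_{\alpha=1}^{L}h_{-1}^{\mathcal{K}_\alpha}(t)$ by induction on $L$, and reduces the inductive step to the ordered-set-partition identity of Lemma 5.1 (formula (5.2)), which it also proves by expanding $\log(\exp(t))=t$. Your Stirling-number phrasing of that identity, proved via $\sum_{l\geq 1}\tfrac{(-1)^{l-1}}{l}(e^x-1)^l=x$, is just a repackaging of the paper's partition-indexed computation, and your final conversion to the $p$-coefficients through (1.5) and the writhe identity $w(\mathcal{L})=\sum_{\alpha}w(\mathcal{K}_\alpha)+2\,lk(\mathcal{L})$ matches the paper exactly.
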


When $g=1$,  we recover  the following theorem of Kanenobu-Miyazawa
(See Theorem 1.1 in \cite{KM}).
\begin{theorem}
The second coefficient $p_{3-L}^{\caL}(t)$ in the HOMFLY-PT
polynomial
\begin{align}
P(\mathcal{L};q,t)=\sum_{g\geq 0}p_{2g+1-L}^{\caL}(t)z^{2g+1-L}
\end{align}
is given by
\begin{align}
p_{3-L}^{\caL}(t)=&t^{-2lk(\caL)}\left((t-t^{-1})^{L-2}\sum_{1\leq
\beta<\gamma \leq L}
t^{2lk(\caL_{(\alpha\beta)})}p_{1}^{\caL_{(\alpha\beta)}}(t)\prod_{\alpha\neq
\beta,\gamma}p_{0}^{\caK_{\alpha}}(t)\right.\\\nonumber
&\left.-(L-2)(t-t^{-1})^{L-1}\sum_{\beta=1}^Lp_{2}^{\caK_{\alpha}}(t)\prod_{\alpha\neq
\beta}p_{0}^{\caK_{\alpha}}(t)\right).
\end{align}
\end{theorem}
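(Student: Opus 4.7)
The plan is to specialize Theorem 1.3 to the case $g = 1$, collapse the resulting triple sum into a subset-indexed recursion via a Stirling-number identity, and then derive the stated closed form by induction on $L$. At $g = 1$ the condition $\sum_s g_s = 1$ forces exactly one $g_{s_0} = 1$ and the rest to vanish, giving
\begin{align*}
h_{2-L}^{\caL}(t) = \sum_{l=2}^L \frac{(-1)^l}{l} \sum_{\cup_{s=1}^l \mathbf{I}_s = \mathbf{L}} \sum_{s_0=1}^{l} h_{2-I_{s_0}}^{\caL_{\mathbf{I}_{s_0}}}(t) \prod_{s \neq s_0} h_{-I_s}^{\caL_{\mathbf{I}_s}}(t).
\end{align*}
For the $g_s = 0$ factors, Theorem 1.4 combined with the writhe-linking identity $w(\caL_{\mathbf{I}}) - 2\,lk(\caL_{\mathbf{I}}) = \sum_{\alpha \in \mathbf{I}} w(\caK_\alpha)$ collapses each leading coefficient into a product over components, $h_{-|\mathbf{I}|}^{\caL_{\mathbf{I}}}(t) = \prod_{\alpha \in \mathbf{I}} A_\alpha$, where $A_\alpha := (t - t^{-1})\, t^{w(\caK_\alpha)}\, p_0^{\caK_\alpha}(t)$; consequently $\prod_{s \neq s_0} h_{-I_s}^{\caL_{\mathbf{I}_s}}$ depends only on $\mathbf{L} \setminus \mathbf{I}_{s_0}$ and not on how it is partitioned.

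Next, I interchange summation so that $\mathbf{I} := \mathbf{I}_{s_0}$ becomes the outer index. For fixed $\mathbf{I}$ the choice of position $s_0 \in \{1,\dots,l\}$ contributes a factor $l$, while the ordered partition of $\mathbf{L} \setminus \mathbf{I}$ into the remaining $l-1$ nonempty blocks is counted by $(l-1)!\, S(L - |\mathbf{I}|, l-1)$, where $S$ denotes the Stirling number of the second kind. The sum over $l$ collapses via the classical identity $\sum_{m \geq 0}(-1)^m m!\, S(k, m) = (-1)^k$, producing the clean recursion
\begin{align*}
B_{\mathbf{L}} = \sum_{\emptyset \neq \mathbf{I} \subsetneq \mathbf{L}} (-1)^{L - |\mathbf{I}| + 1}\, B_{\mathbf{I}}\, A_{\mathbf{L} \setminus \mathbf{I}},
\end{align*}
where $B_{\mathbf{I}} := h_{2 - |\mathbf{I}|}^{\caL_{\mathbf{I}}}(t)$ and $A_{\mathbf{K}} := \prod_{\alpha \in \mathbf{K}} A_\alpha$.

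Finally, I induct on $L$ to prove $B_{\mathbf{L}} = T(\mathbf{L})$, where
\begin{align*}
T(\mathbf{L}) := \sum_{|\mathbf{J}| = 2} B_{\mathbf{J}}\, A_{\mathbf{L} \setminus \mathbf{J}} - (L - 2)\sum_{|\mathbf{J}| = 1} B_{\mathbf{J}}\, A_{\mathbf{L} \setminus \mathbf{J}}.
\end{align*}
The base case $L = 3$ is immediate from the recursion, since every $\mathbf{I}$ with $\emptyset \neq \mathbf{I} \subsetneq \mathbf{L}$ then satisfies $|\mathbf{I}| \leq 2$ and $B_{\mathbf{I}}$ is already atomic. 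For the inductive step, substitute $B_{\mathbf{I}} = T(\mathbf{I})$ for every $|\mathbf{I}| \in \{2,\dots,L-1\}$ into the recursion and swap the sums over $\mathbf{I}$ and $\mathbf{J}$; the coefficient of each atomic $B_{\mathbf{J}}\, A_{\mathbf{L} \setminus \mathbf{J}}$ becomes an alternating binomial sum over intermediate $\mathbf{I}$ with $\mathbf{J} \subseteq \mathbf{I} \subsetneq \mathbf{L}$, which reduces via $\sum_k \binom{n}{k}(-1)^k = 0$ and its weighted variant $\sum_k k \binom{n}{k}(-1)^k$ to exactly $+1$ when $|\mathbf{J}| = 2$ and $-(L - 2)$ when $|\mathbf{J}| = 1$. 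Unwinding $B_\alpha = p_2^{\caK_\alpha}\, t^{w(\caK_\alpha)}(t - t^{-1})$ and $B_{\{\beta,\gamma\}} = p_1^{\caL_{(\beta\gamma)}}\, t^{w(\caL_{(\beta\gamma)})}(t - t^{-1})$ and reassembling the writhe-linking factors yields the formula in Theorem 1.5. The principal obstacle is this inductive step: every intermediate subset size from $3$ to $L-1$ contributes, and isolating the atomic coefficients $+1$ and $-(L-2)$ requires careful sign-tracking through a weighted alternating binomial sum — it is precisely this calculation that produces the $(L - 2)$ prefactor in the statement.
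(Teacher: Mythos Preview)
Your argument is correct and follows the same overall strategy as the paper: specialize Theorem~1.3 to $g=1$, feed in Theorem~1.4 for the $g_s=0$ factors, and prove the closed form~(4.16) for $h_{2-L}^{\caL}(t)$ by induction on $L$, with the final passage to $p_{3-L}^{\caL}(t)$ via~(1.5). The combinatorial identities you invoke are the same ones the paper proves in Section~5: your Stirling identity $\sum_m(-1)^m m!\,S(k,m)=(-1)^k$ is exactly Lemma~5.3 (since $\sum_{\cup_{i=1}^n S_i=\{1,\dots,m\}}1=n!\,S(m,n)$), and your binomial sums are Lemma~5.4.

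Where you differ is in the order of operations. The paper substitutes the inductive hypothesis directly into the ordered-partition formula~(4.12), keeps the full $\sum_l\sum_{\cup\mathbf{I}_s}\sum_r$ structure throughout, and only at the end evaluates the coefficients of the two monomial types via identities~(5.9), (5.11), and~(5.16). You instead collapse the ordered-partition sum \emph{first}: fixing the distinguished block $\mathbf{I}=\mathbf{I}_{s_0}$ and summing out the remaining $l-1$ ordered blocks via the Stirling identity yields the clean subset-indexed recursion
\[
B_{\mathbf{L}}=\sum_{\emptyset\neq\mathbf{I}\subsetneq\mathbf{L}}(-1)^{L-|\mathbf{I}|+1}B_{\mathbf{I}}\,A_{\mathbf{L}\setminus\mathbf{I}},
\]
and only then do you induct. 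This separation of concerns makes the induction step a pure alternating binomial computation over intermediate subsets $\mathbf{J}\subseteq\mathbf{I}\subsetneq\mathbf{L}$, rather than a mixed count over ordered partitions. The paper's route has the virtue of staying close to the raw formula~(4.3); yours buys a more transparent recursion that would scale more readily to higher $g$.
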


By using Theorem 1.3,  we reduce the proofs of Theorem 1.4 and
Theorem 1.5 to some combinatorial identities which will be shown in
Section 5.

Finally, in the appendix, for the reader's convenience, we describe
the classical approach to Theorem 1.4 and Theorem 1.5 as shown in
\cite{LM} and \cite{KM} with a slightly different method.

{\bf Acknowledgements.} This paper grows out of an old note by the
second author when he learned of the Labastida-Mari\~no-Ooguri-Vafa
question from Prof. Kefeng Liu. He would like to thank Prof. Kefeng
Liu for sharing with him many  new ideas. He also appreciates the
collaboration with Qingtao Chen, Kefeng Liu and Pan Peng in this
area and many valuable discussions with them within the past years.
The authors are grateful to the referees for their valuable comments
and suggestions which greatly improved the presentation of the
content.

The research of S. Zhu is supported by the National Science
Foundation of China grant No. 11201417 and the China Postdoctoral
Science special Foundation No. 2013T60583.

\section{HOMFLY-PT polynomials}
The HOMFLY-PT polynomial of an oriented link $\mathcal{L}$ can be
defined by using the framed HOMFLY-PT skein of the plane
$\mathcal{S}(\mathbb{R}^2)$ which is the set of linear combinations
of oriented link diagrams, modulo two relations given in Figure 1,
where $z=q-q^{-1}$.
\begin{figure}[!htb]
\begin{center}
\includegraphics[width=150 pt]{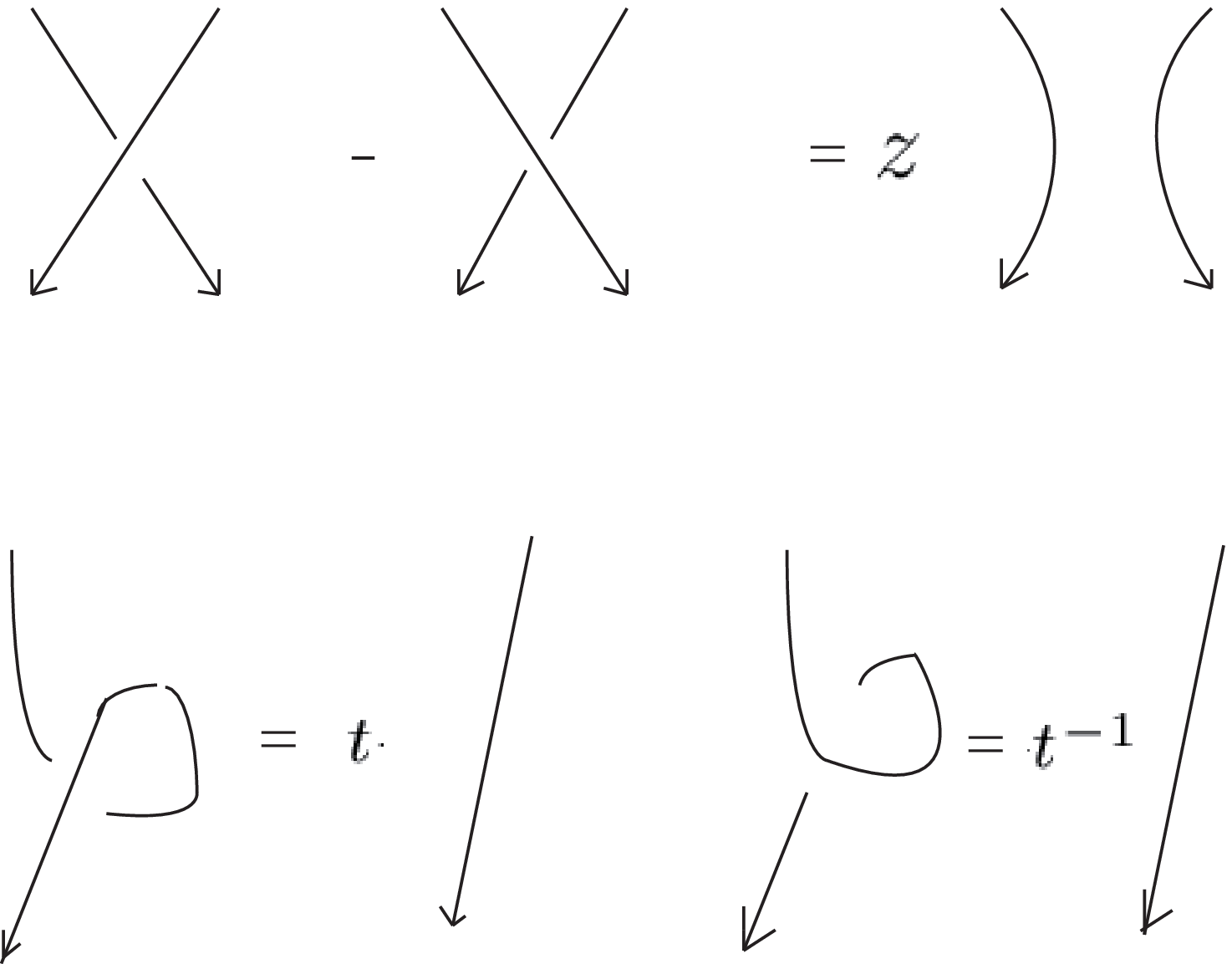}
\caption{}
\end{center}
\end{figure}
It is easy to follow that the removal an unknot requires  a
multiplication by the scalar $s=\frac{t-t^{-1}}{q-q^{-1}}$, i.e we
have the relation
 showed in Figure 2.
\begin{figure}[!htb]
\begin{center}
\includegraphics[width=80 pt]{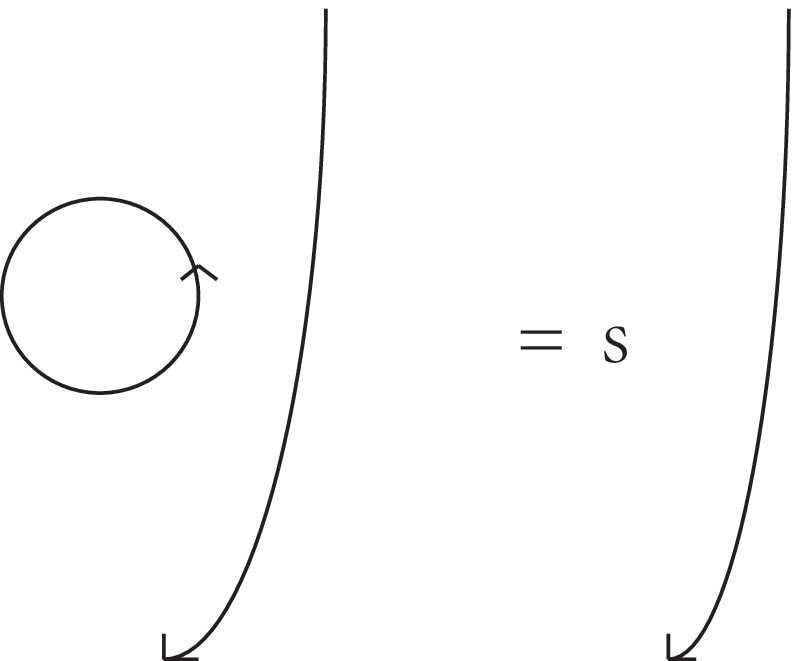}
\caption{}
\end{center}
\end{figure}

The planar projection of an oriented link, $\mathcal{L}$, gives an
oriented diagram that is denoted by $D_{\mathcal{L}}$. By using the
above two relations, any diagram $D_\mathcal{L}$ is equal to a
scalar. We denote the resulting scalar by $\langle D_{\mathcal{L}}
\rangle \in \Lambda$. Then the (framed unreduced) HOMFLY-PT
polynomial $\mathcal{H}(\mathcal{L};q,t)$ of the link $\mathcal{L}$
is defined by $\mathcal{H}(\mathcal{L};q,t)=\langle
D_\mathcal{L}\rangle$. We use the convention  $\langle \ \rangle=1$
for the empty diagram, for the unknot $U$,
$\mathcal{H}(U;q,t)=\frac{t-t^{-1}}{q-q^{-1}}$. The two relations
shown in Figure 1 lead to
\begin{align}
\mathcal{H}(\mathcal{L}_+;q,t)-\mathcal{H}(\mathcal{L}_-;q,t)=z\mathcal{H}(\mathcal{L}_0;q,t),\\
\mathcal{H}(\mathcal{L}^{+1};q,t)=t\mathcal{H}(\mathcal{L};q,t),\
\mathcal{H}(\mathcal{L}^{-1};q,t)=t^{-1}\mathcal{H}(\mathcal{L};q,t).
\end{align}
The classical HOMFLY-PT polynomial of a link $\mathcal{L}$ is given
by
\begin{align}
P(\mathcal{L};q,t)=\frac{t^{-w(\mathcal{L})}\mathcal{H}(\mathcal{L};q,t)}{\mathcal{H}(U;q,t)}.
\end{align}

By the observation used in \cite{CLPZ}, for a link $\mathcal{L}$
with $L$ components, we introduce the link polynomial
$\check{\mathcal{H}}(\mathcal{L};q,t)$ defined as
\begin{align}
\check{\mathcal{H}}(\mathcal{L};q,t)=z^{L}\mathcal{H}(\mathcal{L};q,t).
\end{align}

By comparing the numbers of components of links $\mathcal{L}_+,
\mathcal{L}_-, \mathcal{L}_0$ in Conway triple,  the relations (2.1)
and (2.2) of $\mathcal{H}(\mathcal{L};q,t)$ give us
\begin{align}
\check{\mathcal{H}}(\mathcal{L}_+;q,t)-\check{\mathcal{H}}(\mathcal{L}_-;q,t)=z^{2\epsilon}
\check{\mathcal{H}}(\mathcal{L}_0;q,t)\\
\check{\mathcal{H}}(\mathcal{L}^{+1};q,t)=t\check{\mathcal{H}}(\mathcal{L};q,t),
\
\check{\mathcal{H}}(\mathcal{L}^{-1};q,t)=t^{-1}\check{\mathcal{H}}(\mathcal{L};q,t)
\end{align}
where $\epsilon=0$ if the crossing is a self-crossing of a knot,
$\epsilon=1$ if this crossing is a crossing between two components
of a link. Let $U$ be the unknot, we have
$\check{\mathcal{H}}(U;q,t)=t-t^{-1}$. By using the relations (2.5)
and (2.6) recursively, we have
\begin{lemma}
For any link $\mathcal{L}$, $\check{\mathcal{H}}(\mathcal{L};q,t)\in
\mathbb{Z}[z^2,t^{\pm 1}]$.
\end{lemma}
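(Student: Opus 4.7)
The plan is to induct on link diagrams with respect to a lexicographically ordered pair. Fix a diagram $D$ of $\mathcal{L}$ with $L$ components, order the components $K_1,\dots,K_L$, and choose a basepoint on each. Traversing the components in order, and each component from its basepoint, declare a crossing \emph{bad} if it is first encountered as an undercrossing of a self-crossing of some $K_i$, or if the over-strand belongs to $K_j$ with $j>i$ while the under-strand belongs to $K_i$. Let $c(D)$ be the number of crossings and $b(D)$ the number of bad crossings; call $D$ \emph{descending} when $b(D)=0$. The claim is that $\check{\mathcal H}(D)\in\mathbb Z[z^2,t^{\pm 1}]$ by lexicographic induction on the pair $(c(D),b(D))$.

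For the base case $b(D)=0$, a descending diagram can be isotoped, using Reidemeister II and III moves together with a sequence of Reidemeister I curls collecting the writhe, to a split union of curled unknots: once $K_1$ lies above every other strand it may be pulled off the remaining diagram, then $K_2$, and so on. The framing relation (2.6) converts each curl into a factor $t^{\pm 1}$, and the $L$-component unlink evaluates to $\check{\mathcal H}(U^L)=(t-t^{-1})^L$, so $\check{\mathcal H}(D)=t^{k}(t-t^{-1})^L$ for some integer $k$, which lies in $\mathbb Z[t^{\pm 1}]\subset\mathbb Z[z^2,t^{\pm 1}]$.

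For the inductive step, suppose $b(D)>0$ and pick a bad crossing. Switching that crossing yields a diagram $D'$ with $c(D')=c(D)$ and $b(D')=b(D)-1$, while the oriented smoothing $D_0$ at the crossing satisfies $c(D_0)=c(D)-1$. The modified skein relation (2.5) gives
\begin{align*}
\check{\mathcal H}(D)=\check{\mathcal H}(D')\pm z^{2\epsilon}\check{\mathcal H}(D_0),
\end{align*}
with $\epsilon\in\{0,1\}$ according to whether the crossing is a self-crossing or an inter-component crossing. Both $(c,b-1)$ and $(c-1,b(D_0))$ are strictly smaller than $(c,b)$ in the lexicographic order, so by the inductive hypothesis $\check{\mathcal H}(D')$ and $\check{\mathcal H}(D_0)$ both lie in $\mathbb Z[z^2,t^{\pm 1}]$. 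Since $z^{2\epsilon}\in\mathbb Z[z^2]$ in either case, so does $\check{\mathcal H}(D)$.

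The main obstacle is the base case: one needs to verify carefully that a descending diagram reduces, purely via the framed skein moves (Reidemeister II, III, and writhe-tracking Reidemeister I), to a union of curled unknots, and then keep track of the framing factor. The inductive step itself is mechanical; the crucial structural input is the observation that the exponent $2\epsilon$ appearing in (2.5) is always \emph{even}, which is exactly the feature that forces the result to live in $\mathbb Z[z^2,t^{\pm 1}]$ rather than merely in $\mathbb Z[z,t^{\pm 1}]$.
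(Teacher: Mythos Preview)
Your argument is correct and is precisely the standard descending-diagram induction that the paper is gesturing at when it says ``by using the relations (2.5) and (2.6) recursively'': the paper leaves the complexity function and base case implicit, while you have supplied them. The only point worth noting is that your base case implicitly uses that $\check{\mathcal H}$ is invariant under Reidemeister II and III, which follows because $\mathcal H$ is (being defined in the framed skein module) and the component count is preserved; with that observed, everything goes through as you wrote.
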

In other words,  for a link $\mathcal{L}$, there are polynomials of
$t$ denoted by $\check{h}_{g}^{\mathcal{L}}(t)\in \mathbb{Z}[t^{\pm
1}]$, such that $\check{\mathcal{H}}(\mathcal{L};q,t)$ has the
following expansion:
\begin{align}
 \check{\mathcal{H}}(\mathcal{L};q,t)=\sum_{g\geq 0}\check{h}_{g}^{\mathcal{L}}(t)z^{2g}.
\end{align}
It immediately implies the following structural formulae for
$\mathcal{H}(\mathcal{L};q,t)$ and $P(\mathcal{L};q,t)$.
\begin{proposition}
For a link $\mathcal{L}$ with $L$ components
\begin{align}
\mathcal{H}(\mathcal{L};q,t)=\sum_{g\geq 0}h_{2g-L}^{\mathcal{L}}(t)z^{2g-L},\\
P(\mathcal{L};q,t)=\sum_{g\geq
0}p_{2g+1-L}^{\mathcal{L}}(t)z^{2g+1-L},
\end{align}
where $h_{2g-L}^{\mathcal{L}}(t)=\check{h}_{g}^{\mathcal{L}}(t)$,
and
$p_{2g+1-L}^{\mathcal{L}}(t)=\frac{t^{-w(\mathcal{L})}h_{2g-L}^{\mathcal{L}}(t)}{t-t^{-1}}$
\end{proposition}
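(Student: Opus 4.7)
The proposition is essentially a repackaging of Lemma 2.1 combined with the definitions of $\check{\mathcal{H}}$ in (2.4) and $P$ in (2.3), so the plan is quite direct: no new skein argument is needed, and essentially all the substantive work has been done in proving Lemma 2.1.

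First I would invoke Lemma 2.1, which guarantees that $\check{\mathcal{H}}(\mathcal{L};q,t)\in\mathbb{Z}[z^2,t^{\pm1}]$, so it admits an expansion
\begin{equation*}
\check{\mathcal{H}}(\mathcal{L};q,t)=\sum_{g\geq 0}\check{h}_{g}^{\mathcal{L}}(t)z^{2g},
\end{equation*}
which is precisely (2.7). Then I would divide both sides by $z^L$, using the defining relation (2.4), to obtain
\begin{equation*}
\mathcal{H}(\mathcal{L};q,t)=z^{-L}\check{\mathcal{H}}(\mathcal{L};q,t)=\sum_{g\geq 0}\check{h}_{g}^{\mathcal{L}}(t)z^{2g-L}.
\end{equation*}
Setting $h_{2g-L}^{\mathcal{L}}(t):=\check{h}_{g}^{\mathcal{L}}(t)$ yields the first claimed expansion (2.8).

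For the second expansion (2.9), I would start from the definition (2.3) of the classical HOMFLY-PT polynomial, namely $P(\mathcal{L};q,t)=t^{-w(\mathcal{L})}\mathcal{H}(\mathcal{L};q,t)/\mathcal{H}(U;q,t)$, and substitute the value $\mathcal{H}(U;q,t)=(t-t^{-1})/z$ recorded in Section 2. This rewrites $P$ as
\begin{equation*}
P(\mathcal{L};q,t)=\frac{z\,t^{-w(\mathcal{L})}}{t-t^{-1}}\,\mathcal{H}(\mathcal{L};q,t).
\end{equation*}
Plugging in the first expansion and collecting $z$-powers gives
\begin{equation*}
P(\mathcal{L};q,t)=\sum_{g\geq 0}\frac{t^{-w(\mathcal{L})}h_{2g-L}^{\mathcal{L}}(t)}{t-t^{-1}}\,z^{2g+1-L},
\end{equation*}
so defining $p_{2g+1-L}^{\mathcal{L}}(t):=t^{-w(\mathcal{L})}h_{2g-L}^{\mathcal{L}}(t)/(t-t^{-1})$ completes the second formula.

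There is essentially no obstacle in this argument itself; all the content is pushed into Lemma 2.1, whose proof proceeds by induction on crossings using the skein relations (2.5)–(2.6), with the parity shift $z^{2\epsilon}$ in (2.5) (recording whether the smoothed crossing merges or splits components) being the key point that keeps the powers of $z$ even. The only minor detail worth remarking is that, implicit in the definition of $p_{2g+1-L}^{\mathcal{L}}(t)$, the division by $t-t^{-1}$ yields a Laurent polynomial in $t$; this follows because $\mathcal{H}(\mathcal{L};q,t)$ is always divisible by $\mathcal{H}(U;q,t)$ in the skein, a fact also implicit in Lemma 2.1 and its consequence that $P(\mathcal{L};q,t)\in\mathbb{Z}[z^{\pm1},t^{\pm1}]$.
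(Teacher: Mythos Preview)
Your proposal is correct and follows exactly the paper's own approach: the paper states Lemma 2.1, writes out the expansion (2.7), and then simply says ``It immediately implies the following structural formulae for $\mathcal{H}(\mathcal{L};q,t)$ and $P(\mathcal{L};q,t)$,'' leaving the details you have spelled out (dividing by $z^L$ and applying (2.3) with $\mathcal{H}(U;q,t)=(t-t^{-1})/z$) to the reader. Your write-up is a faithful unpacking of that one-line deduction.
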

We remark that the formula (2.9) was first showed in the paper
\cite{LM}.

\section{An intermediate invariant}
In this section,  we introduce an intermediate invariant whose
properties imply the Lickorish-Millett type formulae.  With the same
notations shown in Section 1, the intermediate invariant
$\mathcal{F}(\mathcal{L};q,t)$ is given by the formula (1.6)
\begin{align}
\mathcal{F}(\mathcal{L};q,t)=\sum_{l=1}^L\frac{(-1)^{l-1}}{l}
\sum_{\cup_{s=1}^l\mathbf{I}_s=\mathbf{L}}
\prod_{s=1}^l\mathcal{H}(\mathcal{L}_{\mathbf{I}_s};q,t).
\end{align}

By formula (2.8) in Proposition 2.2, it is obvious that
\begin{align}
\mathcal{F}(\mathcal{L};q,t)\in z^{-L}\mathbb{Q}[z^2,t^{\pm 1}].
\end{align}
In fact, we have a more precise structure for
$\mathcal{F}(\mathcal{L};q,t)$.
\begin{proposition}
If we use the notation $\deg_z f$ to denote the lowest degree of $z$
in a polynomial $f\in z^{-L}\mathbb{Q}[z^2,t^{\pm 1}]$.
\begin{align}
\deg_{z} \mathcal{F}(\mathcal{L};q,t)\geq L-2.
\end{align}
\end{proposition}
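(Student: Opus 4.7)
The plan is to prove the proposition by induction on the number of components $L$, using a skein-type relation for $\mathcal{F}$ at mixed crossings together with its vanishing on split diagrams.

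The key step is to establish a skein identity for $\mathcal{F}$ under a crossing change at a \emph{mixed} crossing (one whose two strands lie on different components $\alpha$ and $\beta$ of $\mathcal{L}$):
\begin{align*}
\mathcal{F}(\mathcal{L}_+; q, t) - \mathcal{F}(\mathcal{L}_-; q, t) = z\, \mathcal{F}(\mathcal{L}_0; q, t),
\end{align*}
where $\mathcal{L}_0$, obtained by smoothing the crossing, has $L - 1$ components. The proof proceeds term by term in (1.6): the chosen crossing appears inside a factor $\mathcal{H}(\mathcal{L}_{\mathbf{I}_s})$ only when both $\alpha$ and $\beta$ lie in $\mathbf{I}_s$, so only decompositions in which $\alpha, \beta$ share a block survive in the difference on the left-hand side. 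Applying (2.1) in that block and invoking the natural bijection between ordered decompositions of $\mathbf{L}$ with $\alpha, \beta$ in a common block and ordered decompositions of the merged index set $\mathbf{L}' = (\mathbf{L} \setminus \{\alpha, \beta\}) \cup \{\alpha\beta\}$, which labels the components of $\mathcal{L}_0$, the right-hand side reassembles into $z\, \mathcal{F}(\mathcal{L}_0)$.

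Next I would verify that $\mathcal{F}$ vanishes on any diagram $\mathcal{L}^{\sqcup}$ whose components occupy disjoint planar regions, provided $L \geq 2$. On such a diagram $\mathcal{H}(\mathcal{L}_\mathbf{I}^{\sqcup}) = \prod_{\alpha \in \mathbf{I}} \mathcal{H}(\mathcal{K}_\alpha)$, hence
\begin{align*}
\mathcal{F}(\mathcal{L}^{\sqcup}) = \Bigl(\prod_{\alpha=1}^{L} \mathcal{H}(\mathcal{K}_\alpha)\Bigr) \sum_{l=1}^{L} (-1)^{l-1}(l-1)!\, S(L, l),
\end{align*}
where $S(L, l)$ is the Stirling number of the second kind. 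The combinatorial sum is $L!$ times the coefficient of $x^L$ in $\log(e^x) = x$, and so vanishes for all $L \geq 2$. With these two facts in hand, the induction on $L$ is immediate. The base case $L = 1$ follows because $\mathcal{F}(\mathcal{K}) = \mathcal{H}(\mathcal{K})$ has lowest $z$-degree $-1 = L - 2$ by Proposition 2.2. For $L \geq 2$, I use the standard topological fact that some sequence of mixed-crossing changes converts $\mathcal{L}$ into a split link $\mathcal{L}^{\sqcup}$; iterating the skein identity along this sequence yields
\begin{align*}
\mathcal{F}(\mathcal{L}; q, t) = \mathcal{F}(\mathcal{L}^{\sqcup}; q, t) + z \sum_{i} \pm\, \mathcal{F}(\mathcal{L}_0^{(i)}; q, t),
\end{align*}
where $\mathcal{F}(\mathcal{L}^{\sqcup}) = 0$ by the previous paragraph and each $\mathcal{L}_0^{(i)}$ has $L - 1$ components, so $\deg_z \mathcal{F}(\mathcal{L}_0^{(i)}) \geq L - 3$ by the inductive hypothesis. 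Every term on the right then has $\deg_z \geq L - 2$.

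The main obstacle will be the bookkeeping for the skein identity in the first step: the weights $(-1)^{l-1}/l$ must be checked to combine compatibly under the block-merging bijection, so that the right-hand side assembles with the correct coefficients into $z\, \mathcal{F}(\mathcal{L}_0)$. The cumulant-type vanishing on split diagrams and the existence of a mixed-crossing unlinking sequence are both classical and should not cause serious difficulty.
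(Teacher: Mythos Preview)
Your proposal is correct and follows essentially the same route as the paper: the same skein identity for $\mathcal{F}$ at mixed crossings, the same cumulant vanishing on split links (the paper also proves this via $\log(e^t)=t$, its Lemma~5.1), and induction on $L$---the paper phrases the inductive step by contradiction after normalizing to $\hat{\mathcal{F}}=\mathcal{F}/z^L$, but this is cosmetic. Your anticipated obstacle is not one: the block-merging bijection preserves the number $l$ of blocks, so the weights $(-1)^{l-1}/l$ carry over unchanged and the right-hand side is $z\,\mathcal{F}(\mathcal{L}_0)$ on the nose.
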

\begin{proof}
For a link $\mathcal{L}$ with $L$ components, $\mathcal{K}_\alpha,
\alpha=1,..,L$. Let us consider a crossing which is a crossing
between two different components $\mathcal{K}_\alpha$ and
$\mathcal{K}_\beta$ of $\mathcal{L}$, we have
\begin{align}
&\mathcal{F}(\mathcal{L}_+;q,t)-\mathcal{F}(\mathcal{L}_-;q,t)\\\nonumber
&=\sum_{l=1}^{L}\frac{(-1)^{l-1}}{l}\sum_{\cup_{s=1}^l \mathbf{I}_s=
\mathbf{L}
}\left(\prod_{s=1}^l\mathcal{H}((\mathcal{L}_+)_{\mathbf{I}_s};q,t)-\prod_{s=1}^l\mathcal{H}((\mathcal{L}_-)_{\mathbf{I}_s};q,t)\right)
\end{align}
Note that on the right hand side of the above formula, only the
terms that contain the crossings in the same sub-link
$\mathcal{L}_{\mathbf{I}_s}$ survive. By skein relation (2.1),  one
has
\begin{align}
\mathcal{F}(\mathcal{L}_+;q,t)-\mathcal{F}(\mathcal{L}_-;q,t)
=z\mathcal{F}(\mathcal{L}_0;q,t).
\end{align}
For brevity,  we introduce the following notation for a link
$\mathcal{L}$ with $L$ components
\begin{align}
\hat{\mathcal{F}}(\mathcal{L};q,t)=\frac{\mathcal{F}(\mathcal{L};q,t)}{z^{L}}.
\end{align}
So we only need to prove
\begin{align}
\deg_{z}\hat{\mathcal{F}}(\mathcal{L};q,t)\geq -2.
\end{align}
We see that the formula (3.5) becomes
\begin{align}
\hat{\mathcal{F}}(\mathcal{L}_+;q,t)-\hat{\mathcal{F}}(\mathcal{L}_-;q,t)
=\hat{\mathcal{F}}(\mathcal{L}_0;q,t).
\end{align}
Then the proof of the formula (3.7) will be finished by induction on
the number of components of link.

For $L=1$, let us consider a knot, $\mathcal{K}$. By Proposition
2.2,
\begin{align}
\hat{\mathcal{F}}(\mathcal{K};q,t)=\frac{\mathcal{F}(\mathcal{K};q,t)}{z}=\frac{\mathcal{H}(\mathcal{K};q,t)}{z}\in
z^{-2}\mathbb{Z}[z^{2},t^{\pm 1}].
\end{align}
So $\deg_{z}\hat{\mathcal{F}}(\mathcal{K};q,t)\geq -2$ holds.

Now we assume the formula (3.7) holds for any link with number of
components $\leq L-1$. Let us consider a link $\mathcal{L}$ with $L$
components, $\mathcal{K}_\alpha, \alpha=1,...,L$. If we assume the
formula (3.7) does not holds for $\mathcal{L}$, i.e
$\deg_{z}\hat{\mathcal{F}}(\mathcal{L};q,t)< -2$, then we can find
the contradiction.

We use the notation $n^{+}_{\alpha,\beta}$ to denote the number of
positive crossings between two components $\mathcal{K}_\alpha$ and
$\mathcal{K}_\beta$. Without loss of the generality, we assume
$n^{+}_{\alpha,\beta}>0$. So we can apply the relation (3.8) to such
a positive crossing in $\mathcal{L}$.
\begin{align}
\hat{\mathcal{F}}(\mathcal{L};q,t)-\hat{\mathcal{F}}(\mathcal{L}_-;q,t)
=\hat{\mathcal{F}}(\mathcal{L}_0;q,t).
\end{align}
Since $\deg_{z}\hat{\mathcal{F}}(\mathcal{L}_0;q,t)\geq -2$ by the
induction hypothesis, we must have
\begin{align}
\deg_{z}\hat{\mathcal{F}}(\mathcal{L}_-;q,t)=\deg_z\hat{\mathcal{F}}(\mathcal{L};q,t)<-2.
\end{align}
We can also apply the relation (3.8) to the link $\mathcal{L}_-$
recursively until the components $\mathcal{K}_\alpha$ and
$\mathcal{K}_\beta$ are separated. In this way, we finally obtain a
separated link
$\tilde{\mathcal{L}}=\otimes_{\alpha=1}^L\mathcal{K}_\alpha$, i.e.
$\mathcal{K}_\alpha$ and $\mathcal{K}_\beta$ are separated for
arbitrary $\alpha\neq\beta$. Then
\begin{align}
\deg_{z}\hat{\mathcal{F}}(\tilde{\mathcal{L}};q,t)=\deg_z\hat{\mathcal{F}}(\mathcal{L};q,t)<-2.
\end{align}

However, for the separated link
$\otimes_{\alpha=1}^L\mathcal{K}_\alpha$, we always have
$\mathcal{H}(\otimes_{\alpha=1}^L\mathcal{K}_\alpha;q,t)=\prod_{\alpha=1}^L\mathcal{H}(\mathcal{K}_\alpha;q,t)$.
Thus
\begin{align}
\mathcal{F}(\tilde{\mathcal{L}};q,t)&=\sum_{l=1}^L\frac{(-1)^{l-1}}{l}
\sum_{\cup_{s=1}^l\mathbf{I}_s=\mathbf{L}}
\prod_{s=1}^l\mathcal{H}(\mathcal{L}_{\mathbf{I}_s};q,t)\\\nonumber
&=\left(\sum_{l=1}^L\frac{(-1)^{l-1}}{l}
\sum_{\cup_{s=1}^l\mathbf{I}_s=\mathbf{L}}
1\right)\prod_{\alpha=1}^L\mathcal{H}(\mathcal{K}_\alpha;q,t)\\\nonumber
&=0,
\end{align}
by the combinatorial identity (5.2) in Section 5. This contradicts
the statement $\deg_z\hat{\mathcal{F}}(\tilde{\mathcal{L}};q,t)<-2$.
So the proof of Proposition 3.1 is completed.
\end{proof}

We remark that Proposition 3.1 was first proved by Liu-Peng
\cite{LP} in order to prove the Labastida-Mari\~no-Ooguri-Vafa
conjecture \cite{LMV,OV}.

\section{Proofs of the Lickorish-Millett type formulae}
By  Proposition  2.2, for every sublink $\mathcal{L}_{\mathbf{I}_s}$
of $\mathcal{L}$, let $I_s=|\mathbf{I}_s|$, the number of the
elements in the set $\mathbf{I}_s$.  we have the following
expansion:
\begin{align}
\mathcal{H}(\mathcal{L}_{\mathbf{I}_s};q,t)=\sum_{g_s\geq
0}h_{2g_s-I_s}^{\mathcal{L}_{\mathbf{I}_s}}(t)z^{2g_s-I_s}.
\end{align}
Therefore, substituting them to the formula (3.1)
\begin{align}
\mathcal{F}(\mathcal{L};q,t)=\sum_{g\geq
0}\left(\sum_{l=1}^{L}\frac{(-1)^{l-1}}{l}\sum_{\sum_{s=1}^lg_s=g}\sum_{\cup_{s=1}^l
\mathbf{I}_s=\mathbf{L} }\prod_{s=1}^l
h_{2g_s-I_s}^{\mathcal{L}_{\mathbf{I}_s}}(t)\right)z^{2g-L}.
\end{align}
For brevity, we let
\begin{align}
f_{2g-L}(t)=\sum_{l=1}^{L}\frac{(-1)^{l-1}}{l}\sum_{\sum_{s=1}^lg_s=g}\sum_{\cup_{s=1}^l
\mathbf{I}_s=\mathbf{L} }\prod_{s=1}^l
h_{2g_s-I_s}^{\mathcal{L}_{\mathbf{I}_s}}(t).
\end{align}
Then Proposition 3.1 tells us
\begin{align}
f_{-L}(t)=f_{2-L}(t)=\cdots =f_{L-4}(t)=0.
\end{align}
These $L-1$ equations will give rise to $L-1$ relations of the
coefficient polynomials.
\begin{theorem}
For a link $\mathcal{L}$ with $L$ components, the coefficients
polynomials $h_{2g-L}^{\mathcal{L}}(t)$, for $g=0,..,L-2$, can be
expressed as follow:
\begin{align}
h_{2g-L}^{\mathcal{L}}(t)=\sum_{l=2}^{L}\frac{(-1)^{l}}{l}\sum_{\sum_{s=1}^lg_s=g}\sum_{\cup_{s=1}^l
\mathbf{I}_s=\mathbf{L} }\prod_{s=1}^l
h_{2g_s-I_s}^{\mathcal{L}_{\mathbf{I}_s}}(t).
\end{align}
\end{theorem}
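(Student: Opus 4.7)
The plan is to deduce Theorem 4.1 as a purely algebraic consequence of Proposition 3.1, with no further skein-theoretic input. In outline: expand each $\mathcal{H}(\mathcal{L}_{\mathbf{I}_s};q,t)$ in powers of $z$, read off the coefficient of $z^{2g-L}$ in the defining formula (3.1) for $\mathcal{F}(\mathcal{L};q,t)$, and use the degree bound $\deg_z \mathcal{F}(\mathcal{L};q,t) \geq L-2$ to obtain vanishing relations. Finally, isolate the $l=1$ term of each relation to express $h_{2g-L}^{\mathcal{L}}(t)$ in the desired form.

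Concretely, I would substitute $\mathcal{H}(\mathcal{L}_{\mathbf{I}_s};q,t) = \sum_{g_s\geq 0} h_{2g_s-I_s}^{\mathcal{L}_{\mathbf{I}_s}}(t)\, z^{2g_s-I_s}$ from Proposition 2.2 into every factor of the product $\prod_{s=1}^l \mathcal{H}(\mathcal{L}_{\mathbf{I}_s};q,t)$ appearing in (3.1). Since $\sum_{s=1}^l I_s = L$ for any ordered decomposition $\cup_s \mathbf{I}_s = \mathbf{L}$, the total $z$-exponent contributed by a tuple $(g_1,\ldots,g_l)$ is $2(\sum_s g_s) - L$. Grouping by $g = \sum_s g_s$ yields the expansion $\mathcal{F}(\mathcal{L};q,t) = \sum_{g\geq 0} f_{2g-L}(t)\, z^{2g-L}$, with $f_{2g-L}(t)$ given by (4.3).

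Proposition 3.1 then forces $f_{2g-L}(t) = 0$ whenever $2g - L < L - 2$, that is, for $g = 0, 1, \ldots, L-2$. This yields the $L-1$ identities (4.4). In each such identity I would peel off the $l=1$ summand: the only ordered decomposition with $l=1$ is $\mathbf{I}_1 = \mathbf{L}$, and the only compatible tuple is $g_1 = g$, so this contribution is precisely $h_{2g-L}^{\mathcal{L}}(t)$. Transposing the remaining $l = 2, \ldots, L$ terms to the other side of $f_{2g-L}(t) = 0$ and absorbing the sign flip $(-1)^{l-1} \mapsto (-1)^l$ produces exactly formula (4.5).

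There is essentially no substantive obstacle at this stage, since all the nontrivial content, namely the skein-relation induction and the combinatorial identity (5.2) used to handle the separated-link base case, has already been carried out in the proof of Proposition 3.1. The only care required is the parity bookkeeping for the bound $\deg_z \mathcal{F} \geq L-2$: one must verify that this inequality translates into vanishing of $f_{2g-L}(t)$ precisely in the range $0 \leq g \leq L-2$, after which the inversion of the relations to solve for $h_{2g-L}^{\mathcal{L}}(t)$ is a one-line rearrangement.
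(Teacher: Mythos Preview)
Your proposal is correct and follows essentially the same approach as the paper: substitute the expansions (4.1) into (3.1) to obtain (4.2)--(4.3), invoke Proposition 3.1 to conclude $f_{2g-L}(t)=0$ for $g=0,\ldots,L-2$, and isolate the $l=1$ term to get (4.5). The paper presents this argument in exactly the same order and with the same observations; there is nothing to add.
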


In the following, we will carefully study the first two equations
$f_{L}(t)=0$ and $f_{2-L}(t)=0$ and recover some classical results
of the Lickorish-Millett type formulae in \cite{LM,KM}.

\subsection{Case 1:  $f_{-L}(t)=0$}
By formula (4.3),
\begin{align}
f_{-L}(t)=\sum_{l=1}^{l}\frac{(-1)^{l-1}}{l}\sum_{\cup_{s=1}^l
\mathbf{I}_s=\mathbf{L}}\prod_{s=1}^lh_{-I_s}^{\mathcal{L}_{\mathbf{I}_s}}(t)=0.
\end{align}
Thus, we have
\begin{align}
h_{-L}^{\mathcal{L}}(t)=\sum_{l=2}^{l}\frac{(-1)^{l}}{l}\sum_{\cup_{s=1}^l
\mathbf{I}_s=\mathbf{L}}\prod_{s=1}^lh_{-I_s}^{\mathcal{L}_{\mathbf{I}_s}}(t).
\end{align}

For $L=2$, it is clear that
\begin{align}
h_{-2}^{\mathcal{L}}(t)=h_{-1}^{\mathcal{K}_{1}}(t)h_{-1}^{\mathcal{K}_{2}}(t).
\end{align}
So we guess that  $h_{-L}^{\mathcal{L}}(t)$ for a general $L$ takes
the form
$h_{-L}^{\mathcal{L}}(t)=\prod_{\alpha=1}^{L}h_{-1}^{\mathcal{K}_{\alpha}}(t)$.
We prove it by induction. We assume it holds for the number of
components of link $\leq L-1$. For $L$,  we have
\begin{align}
h_{-L}^{\mathcal{L}}(t)&=\sum_{l=2}^{L}\frac{(-1)^l}{l}\sum_{\cup_{s=1}^l\mathbf{I}_s=\mathbf{L}}\prod_{\alpha=1}^{L}
h_{-1}^{\mathcal{K}_{\alpha}}(t)\\\nonumber &=\prod_{\alpha=1}^{L}
h_{-1}^{\mathcal{K}_{\alpha}}(t)\left(\sum_{l=2}^{L}\frac{(-1)^l}{l}\sum_{\cup_{s=1}^l\mathbf{I}_s=\mathbf{L}}1\right)\
\text{By the identity (5.2) }\\\nonumber &=\prod_{\alpha=1}^{L}
h_{-1}^{\mathcal{L}_{\alpha}}(t)
\end{align}

By using the formula (1.5),
$h_{-L}^{\mathcal{L}}(t)=p_{1-L}^{\mathcal{L}}(t)t^{w(\mathcal{L})}(t-t^{-1})$,
the formula (4.9) changes to
\begin{align}
p_{1-L}^{\mathcal{L}}(t)=t^{-2lk(\mathcal{L})}(t-t^{-1})^{L-1}\prod_{\alpha=1}^Lp_{0}^{\mathcal{K}_\alpha}(t),
\end{align}
where the identity
$w(\mathcal{L})=\sum_{\alpha=1}^Lw(\mathcal{K}_\alpha)+2lk(\mathcal{L})$
is used.  Thus, we finish the proof of Theorem 1.4.

\subsection{Case 2: $f_{2-L}(t)=0$}
By formula (4.3), we have
\begin{align}
f_{2-L}(t)=\sum_{l=1}^L\frac{(-1)^{l-1}}{l}\sum_{\cup_{s=1}^l
\mathbf{I}_s=\mathbf{L}}\sum_{r=1}^lh_{2-I_r}^{\mathcal{L}_{\mathbf{I}_r}}(t)\prod_{s\neq
r}h_{-I_s}^{\mathcal{L}_{\mathbf{I}_s}}(t)=0.
\end{align}
Therefore,
\begin{align}
h_{2-L}^{\mathcal{L}}(t)=\sum_{l=2}^L\frac{(-1)^l}{l}\sum_{\cup_{s=1}^l
\mathbf{I}_s=\mathbf{L}}\sum_{r=1}^lh_{2-I_r}^{\mathcal{L}_{\mathbf{I}_r}}(t)\prod_{s\neq
r}h_{-I_s}^{\mathcal{L}_{\mathbf{I}_s}}(t).
\end{align}
one then calculates that, for $L=2$,
\begin{align}
h_0^\mathcal{L}(t)=h_1^{\mathcal{K}_1}(t)h_{-1}^{\mathcal{K}_2}(t)+h_1^{\mathcal{K}_2}(t)h_{-1}^{\mathcal{K}_1}(t).
\end{align}
For $L=3$,
\begin{align}
h_{-1}^\mathcal{L}(t)&=h_0^{\mathcal{L}_{(12)}}(t)h_{-1}^{\mathcal{K}_3}(t)+h_0^{\mathcal{L}_{(13)}}(t)h_{-1}^{\mathcal{K}_2}(t)
+h_0^{\mathcal{L}_{(23)}}(t)h_{-1}^{\mathcal{K}_1}(t)\\\nonumber
&-h_1^{\mathcal{K}_{1}}(t)h_{-1}^{\mathcal{K}_2}(t)h_{-1}^{\mathcal{K}_3}(t)-h_{-1}^{\mathcal{K}_{1}}(t)h_{1}^{\mathcal{K}_2}(t)h_{-1}^{\mathcal{K}_3}(t)
-h_{-1}^{\mathcal{K}_{1}}(t)h_{-1}^{\mathcal{K}_2}(t)h_{1}^{\mathcal{K}_3}(t).
\end{align}
For $L=4$,
\begin{align}
h_{-2}^\mathcal{L}(t)=\sum_{1\leq \beta< \gamma\leq
4}h_{0}^{\mathcal{L}_{(\beta\gamma)}}(t)\prod_{\substack{\alpha=1,..,4
\\ \alpha\neq
\beta,\gamma}}h_{-1}^{\mathcal{K}_\alpha}(t)-2\sum_{\beta=1}^L
h_{1}^{\mathcal{K}_\beta}(t)\prod_{\substack{\alpha=1,..,4\\
\alpha\neq \beta}}h_{-1}^{\mathcal{K}_\alpha}(t).
\end{align}
This suggest that, for the general $L$, the following formula:
\begin{align}
h_{2-L}^\mathcal{L}(t)=\sum_{1\leq \beta< \gamma\leq
L}h_{0}^{\mathcal{L}_{(\beta\gamma)}}(t)\prod_{\substack{\alpha=1,..,L
\\ \alpha\neq
\beta,\gamma}}h_{-1}^{\mathcal{K}_\alpha}(t)-(L-2)\sum_{\beta}h_{1}^{\mathcal{K}_\beta}(t)\prod_{\substack{\alpha=1,..,L\\
\alpha\neq \beta}}h_{-1}^{\mathcal{K}_\alpha}(t).
\end{align}

Next, we will prove it by induction. Assuming it already holds for
the number of components  $\leq L-1$. Then for $L$, by formula
$(4.12)$ and using the induction hypothesis, we have
\begin{align}
h_{2-L}^{\mathcal{L}}(t)&=\sum_{l=2}^{L}\frac{(-1)^l}{l}\sum_{\cup_{s=1}^l
\mathbf{I}_s=\{1,2,..,L\}}\sum_{r=1}^l\left(\sum_{\substack{\beta,\gamma
\in \mathbf{I}_r\\
\beta<\gamma}}h_0^{\mathcal{L}_{(\beta\gamma)}}(t)\prod_{\substack{\alpha\neq
\beta,\gamma\\
\alpha\in
\mathbf{I}_r}}h_{-1}^{\mathcal{K}_\alpha}(t)\right)\prod_{s\neq
r}h_{-I_s}^{\mathcal{L}_{\mathbf{I}_s}}(t)\\\nonumber
&-\sum_{l=2}^{L}\frac{(-1)^l}{l}\sum_{\cup_{s=1}^l
\mathbf{I}_s=\{1,2,..,L\}}\sum_{r=1}^l(I_r-2)\sum_{\beta\in
\mathbf{I}_r}h_1^{\mathcal{K}_\beta}(t)\prod_{\substack{\alpha\neq
\beta\\ \alpha\in
\mathbf{I}_r}}h_{-1}^{\mathcal{K}_\alpha}(t)\prod_{s\neq
r}h_{-I_s}^{\mathcal{L}_{\mathbf{I}_s}}(t)
\end{align}

Let us consider the first summation term on the right side of the
formula (4.17), it is easy to see that this term is symmetric with
respect to the index $\{1,2,..,L\}$. Thus all the items in this
summation have the same coefficient. In order to determine this
coefficient, without loss of generality, we only need to count the
number of terms of the form
\begin{align}
h_{0}^{\caL_{(12)}}(t)h_{-1}^{\caK_3}(t)\cdots h_{-1}^{\caK_{L}}(t)
\end{align}
appearing in the summation
\begin{align}
\sum_{\cup_{s=1}^l
\mathbf{I}_s=\{1,2,..,L\}}\sum_{r=1}^l\left(\sum_{\substack{\beta,\gamma
\in \mathbf{I}_r\\
\beta<\gamma}}h_0^{\mathcal{L}_{(\beta\gamma)}}(t)\prod_{\substack{\alpha\neq
\beta,\gamma\\
\alpha\in
\mathbf{I}_r}}h_{-1}^{\mathcal{K}_\alpha}(t)\right)\prod_{s\neq
r}h_{-I_s}^{\mathcal{L}_{\mathbf{I}_s}}(t)
\end{align}
for a fixed $2\leq l\leq L$. It is easy to see
 this number is, in fact, equal to the number of different
decompositions $\cup_{s=1}^{l}\mathbf{I}_s=\{1,2,\cdot\cdot,L\}$
such that $\caK_1,\caK_2\in \mathbf{I}_i$ for some $1\leq i\leq L$.
We consider two kinds of such
$\{\mathbf{I}_1,\mathbf{I}_2,\cdot,\cdot,\mathbf{I}_l\}$, the first
are those with some $\mathbf{I}_i$, such that $|\mathbf{I}_i|=2$ and
$ \mathbf{I}_i=\{\caK_1,\caK_2\}$, and the second are those with
some $\mathbf{I}_i$, such that $|\mathbf{I}_i|\geq 3,$ and
$\caK_1,\caK_2\in \mathbf{I}_i$. We remark that for $l=L$, this
number is $0$.  Through a straight combinatoric enumeration, we get
the coefficient of
$h_{0}^{\caL_{(12)}}(t)h_{-1}^{\caK_3}(t)\cdot\cdot
h_{-1}^{\caK_{L}}(t)$ as follows
\begin{align}
&\sum_{l\geq 2}^{L-1}\frac{(-1)^l}{l}
\left(l(\sum_{\cup_{i=1}^{l-1}S_{i}=\{1,..,L-2\}}1)+l(\sum_{\cup_{i=1}^{l}S_{i}=\{1,..,L-2\}}1)\right)\\\nonumber
&=\sum_{l\geq 2}^{L-1}(-1)^l
\left(\sum_{\cup_{i=1}^{l-1}S_{i}=\{1,..,L-2\}}1+\sum_{\cup_{i=1}^{l}S_{i}=\{1,..,L-2\}}1\right)
\\\nonumber &=1.
\end{align}
by the formula (5.9) shown in the next section, where the summation
$\sum_{\cup_{i=1}^{l-1}S_{i}=\{1,..,L-2\}}$ will also be explained.
Thus the first summation in (4.17) is simplified to
\begin{align}
\sum_{1\leq \beta< \gamma\leq
L}h_{0}^{\mathcal{L}_{(\beta\gamma)}}(t)\prod_{\alpha\in
\mathbf{L},\alpha\neq \beta,\gamma}h_{-1}^{\mathcal{K}_\alpha}(t).
\end{align}

Now, let us consider the second summation term in the right side of
formula (4.17).  We also note that this term is symmetric with
respect to the index $\{1,..,L\}$. Without loss of generality, we
calculate the coefficient of
\begin{align}
 h_{1}^{\caK_{1}}(t)h_{-1}^{\caK_{2}}(t)\cdots
h_{-1}^{\caK_{L}}(t) \end{align}
in this summation. In fact, for
$2\leq l\leq L$, we only need to count the number decompositions
$\cup_{s=1}^l \mathbf{I}_s=\{1,2,...,L\}$ with $\mathbf{I}_i$  such
that $\caK_1\in \mathbf{I}_i$ and $|\mathbf{I}_i|=k$, for $1\leq
k\leq L-1$. By a straight enumeration, this number is  equal to
\begin{align}
l\cdot\binom{L-1}{k-1}\cdot\left(\sum_{\cup_{i=1}^{l-1}S_i=\{1,2,\cdots,L-k\}}1\right).
\end{align}
Therefore, we can calculate the coefficient of
$h_{1}^{\caK_{1}}(t)h_{-1}^{\caK_{2}}(t)\cdot\cdot
h_{-1}^{\caK_{L}}(t)$ as follows
\begin{align}
&\sum_{l\geq 2}^{L}\frac{(-1)^l}{l} \sum_{k=1}^{L-1}\left((k-2)\cdot
l\cdot\binom{L-1}{k-1}\sum_{\cup_{i=1}^{l-1}S_{i}=\{1,..,L-k\}}1\right)\\\nonumber
&=\sum_{k=1}^{L-1}(k-2)\cdot\binom{L-1}{k-1}\sum_{l\geq
2}^{L}\left((-1)^l\sum_{\cup_{i=1}^{l-1}S_{i}=\{1,..,L-k\}}1\right)\\\nonumber
&=\sum_{k=1}^{L-1}(k-2)\cdot\binom{L-1}{k-1}\sum_{l\geq
2}^{L-k+1}\left((-1)^l\sum_{\cup_{i=1}^{l-1}S_{i}=\{1,..,L-k\}}1\right)
\ \text{by the formula (5.11)}\\\nonumber
&=\sum_{k=1}^{L-1}(k-2)\cdot\binom{L-1}{k-1}(-1)^{L-k+1}\\\nonumber
&=\sum_{k=1}^{L-1}k\cdot\binom{L-1}{k-1}(-1)^{L-k+1}-2\sum_{k=1}^{L-1}\binom{L-1}{k-1}(-1)^{L-k+1}
\ \text{by formula (5.16)}\\\nonumber &=L-2
\end{align}
So the second summation term is simplified  to
\begin{align}
(L-2)\sum_{\beta\in
\mathbf{L}}h_{1}^{\mathcal{K}_\beta}(t)\prod_{\alpha\in \mathbf{L},
\alpha\neq \beta}h_{-1}^{\mathcal{K}_\alpha}(t).
\end{align}
Thus, the induction is completed. We proved the formula (4.16).

By formula (1.5),  we have
$h_{-L}^{\mathcal{L}}(t)=p_{1-L}^{\mathcal{L}}(t)t^{w(\mathcal{L})}(t-t^{-1})$
and
$h_{2-L}^{\mathcal{L}}(t)=p_{3-L}^{\mathcal{L}}(t)t^{w(\mathcal{L})}(t-t^{-1})$.
Substituting them in formula (4.16), Theorem 1.5 is proved.

\section{Some combinatorial identities}
In this section, we will provide the combinatorial formulae used in
 Section 4.

 Let us fix some notation first. A partition $\lambda$ is a finite sequence of positive integers
$(\lambda_1,\lambda_2,..)$ such that $ \lambda_1\geq
\lambda_2\geq\cdots. $ The length of $\lambda$ is the total number
of parts in $\lambda$ and denoted by $\ell(\lambda)$. The degree of
$\lambda$ is defined by $
|\lambda|=\sum_{i=1}^{\ell(\lambda)}\lambda_i. $ If $|\lambda|=d$,
we say $\lambda$ is a partition of $d$ and denoted as $\lambda\vdash
d$. The automorphism group of $\lambda$, denoted by Aut($\lambda$),
contains all the permutations that permute parts of $\lambda$ by
keeping it as a partition. Obviously, Aut($\lambda$) has the order
\begin{align}
|\text{Aut}(\lambda)|=\prod_{i=1}^{\ell(\lambda)}m_i(\lambda)!
\end{align}
where $m_i(\lambda)$ denotes the number of times that $i$ occurs in
$\lambda$. We can also write a partition $\lambda$ as $
\lambda=(1^{m_1(\lambda)}2^{m_2(\lambda)}\cdots). $ For $m\geq 2$,
and $n\leq m$, we will use the notation
$\cup_{i=1}^{n}S_{i}=\{1,..,m\}$ to denote a nonempty disjoint
ordered decomposition of the set $\{1,..,m\}$, i.e. every
$S_i\subset \{1,2,..,m\}$, $S_i\neq \emptyset$ and $S_i\cap
S_j=\emptyset$ for $i\neq j$ such that $S_{1}\cup S_2\cup \cdots
\cup S_n=\{1,..,m\}$, and different orders of $S_{1}, S_2, \cdots,
S_n$ give different decompositions. The summation
$\sum_{\cup_{i=1}^{n}S_{i}=\{1,..,m\}}$ denotes the sum over all
different nonempty disjoint ordered decompositions
$\cup_{i=1}^{n}S_{i}=\{1,..,m\}$.

\begin{lemma}
Suppose $m\geq 2$,
\begin{align}
\sum_{n \geq
2}^{m}\frac{(-1)^{n}}{n}\sum_{\cup_{i=1}^{n}S_{i}=\{1,..,m\}}1=1
\end{align}
\end{lemma}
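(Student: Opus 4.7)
The plan is to identify the inner quantity
$a(m,n):=\sum_{\cup_{i=1}^n S_i=\{1,\dots,m\}}1$
as a well-known enumerative invariant and then exploit its exponential generating function. Concretely, an ordered disjoint decomposition $\cup_{i=1}^n S_i=\{1,\dots,m\}$ with nonempty blocks is the same as a surjection $\{1,\dots,m\}\twoheadrightarrow\{1,\dots,n\}$, so $a(m,n)=n!\,S(m,n)$, where $S(m,n)$ is a Stirling number of the second kind. It is classical that
\begin{equation*}
\sum_{m\geq n}a(m,n)\,\frac{x^m}{m!}=(e^x-1)^n,
\end{equation*}
and I would either quote this or quickly justify it by noting that the EGF of a single nonempty block is $e^x-1$ and that $n$ ordered labeled blocks multiply their EGFs.

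Next I would plug this into the Mercator expansion
\begin{equation*}
\log(1+y)=\sum_{n\geq 1}\frac{(-1)^{n-1}}{n}y^n
\end{equation*}
with $y=e^x-1$, yielding
\begin{equation*}
\sum_{n\geq 1}\frac{(-1)^{n-1}}{n}(e^x-1)^n=\log(e^x)=x.
\end{equation*}
Comparing the coefficient of $x^m/m!$ on both sides for $m\geq 2$ gives
\begin{equation*}
\sum_{n\geq 1}\frac{(-1)^{n-1}}{n}a(m,n)=0\qquad(m\geq 2).
\end{equation*}

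To finish, I would isolate the $n=1$ term: since $a(m,1)=1$ (the only ordered decomposition into one block is $\{1,\dots,m\}$ itself), the above identity becomes
\begin{equation*}
1+\sum_{n=2}^{m}\frac{(-1)^{n-1}}{n}a(m,n)=0,
\end{equation*}
which is exactly the claim after multiplying by $-1$. No step looks technically hard; the only subtlety to flag is the EGF interpretation of ordered set partitions, which I would either prove in two lines (convolution of $n$ copies of $e^x-1$) or reference as standard. As a sanity check, one could also verify the base case $m=2$ directly, where $a(2,2)=2$ gives $\frac{(-1)^2}{2}\cdot 2=1$, matching the identity; if an inductive proof were preferred over the generating function argument, this base case together with the recursion $a(m+1,n)=n\bigl(a(m,n)+a(m,n-1)\bigr)$ would provide an alternative route, though the generating function approach is cleaner and I would present that.
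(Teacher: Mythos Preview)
Your proof is correct and is essentially the same argument as the paper's: both extract the coefficient of $t^m$ (for $m\geq 2$) in the identity $\log(e^t)=t$ after expanding $\log(1+y)$ with $y=e^t-1$, and then isolate the $n=1$ term. The only difference is packaging: you identify $a(m,n)$ as a surjection count with EGF $(e^x-1)^n$, while the paper rewrites the same count via partitions $\lambda\vdash m$ with $\ell(\lambda)=n$ and expands the multinomials by hand; your route is a bit cleaner but not genuinely different.
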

\begin{proof}
Let $\sum_{i=1}^{n}a_i= m $ and $1\leq a_{n}\leq\cdot\cdot\leq
a_{1}$, through a straight combinatoric enumeration
\begin{align}
\sum_{\cup_{i=1}^{n}S_{i}=\{1,..,m\}}1=\sum_{\{1\leq
a_{n}\leq\cdot\cdot\leq
a_{1}\}}\binom{m}{a_{1},..,a_{n}}\frac{n!}{|Aut((a_{1},..,a_{n}))|}
\end{align}
Then $\{a_{1},\cdot\cdot,a_{n}\}$ denotes a partition $\lambda$ of
$m$ with $n$ components, $\lambda_{i}=a_{i}, \ell(\lambda)=n$. Thus,
the identity we need to prove can be reduced to
\begin{align}
\sum_{\lambda\in S}\frac{(-1)^{\ell(\lambda)}}{\ell(\lambda)}
\frac{\ell(\lambda)!m!}{\lambda_{1}!\cdot\cdot\lambda_{\ell(\lambda)}!|Aut(\lambda)|}=1
\end{align}
where $S=\{\lambda|\lambda\vdash m,\ell(\lambda)\geq 2\}$. Expanding
the right side of the following identity:
\begin{align}
t&=\log(\exp(t))\\\nonumber &=\log(1+\sum_{n\geq
1}\frac{t^n}{n!})\\\nonumber &=\sum_{k \geq
1}^{\infty}\frac{(-1)^{k-1}}{k}\left(\sum_{n\geq
1}\frac{t^n}{n!}\right)^{k}\\\nonumber &=\sum_{k\geq
1}^{\infty}\frac{(-1)^{k-1}}{k}\sum_{\substack{
\{k_{1},k_{2},\cdot,\cdot\cdot\}\\\sum_{i=1}^{n}k_{i}=k
}}\binom{k}{k_{1},k_{2},...}\prod_{i\geq
1}\frac{t^{ik_{i}}}{(i!)^{k_{i}}}
\end{align}
Here we consider a partition $\lambda$ with $k$ components,
$\lambda=(1^{k_1}2^{k_2}\cdot\cdot\cdot)$, $m_{i}(\lambda)=k_{i}$,
let $m=|\lambda|$, then $m=\sum im_{i}(\lambda)=ik_{i}$, we have
$\lambda\vdash m$, $k_{1}+k_{2}+\cdot\cdot=k=\ell(\lambda)$,
$|k_{1}!k_{2}!\cdot\cdot\cdot|=|Aut(\lambda)|$, $\prod_{i\geq
1}(i!)^{k_{i}}=\lambda_{1}!\lambda_{2}!\cdot\cdot\cdot\lambda_{\ell(\lambda)}!$.
Thus the formula (5.5) is
\begin{align}
t=\sum_{m\geq 1}\sum_{\lambda\in
S'}\frac{(-1)^{\ell(\lambda)-1}}{\ell(\lambda)}
\frac{\ell(\lambda)!t^m}{\lambda_{1}!\cdot\cdot\lambda_{\ell(\lambda)}!|Aut(\lambda)|}
\end{align}
where $S'=\{\lambda|\lambda\vdash m, \ell(\lambda)\geq 1\}$.\\
It is clear that the coefficients of $t^{m}$ on the right side are
zero when $m\geq 2$. Equivalently, when $m\geq 2$,
\begin{align}
\sum_{\lambda\in\{\lambda|\lambda\vdash m, \ell(\lambda)\geq 1\}
}\frac{(-1)^{\ell(\lambda)-1}}{\ell(\lambda)}
\frac{\ell(\lambda)!}{\lambda_{1}!\cdot\cdot\lambda_{\ell(\lambda)}!|Aut(\lambda)|}=0
\end{align}
Thus
\begin{align}
\sum_{\lambda\in\{\lambda|\lambda\vdash m, \ell(\lambda)\geq
2\}}\frac{(-1)^{\ell(\lambda)}}{\ell(\lambda)}
\frac{\ell(\lambda)!m!}{\lambda_{1}!\cdot\cdot\lambda_{\ell(\lambda)}!|Aut(\lambda)|}=1
\end{align}
by noting that when $l(\lambda)=1$, $\lambda_1=m$.
\end{proof}

\begin{lemma}
Suppose $m\geq3$,
\begin{align}
\sum_{n\geq 2}^{m}(-1)^n
\left(\sum_{\cup_{i=1}^{n-1}S_{i}=\{1,..,m-1\}}1+\sum_{\cup_{i=1}^{n}S_{i}=\{1,..,m-1\}}1\right)=1
\end{align}
\end{lemma}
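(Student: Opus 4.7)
The plan is to reduce the identity to a simple telescoping cancellation. For brevity, let me write $P(a,b) := \sum_{\cup_{i=1}^{b}S_i = \{1,\ldots,a\}} 1$ for the number of ordered decompositions of $\{1,\ldots,a\}$ into $b$ nonempty disjoint parts. Then the identity to prove is
\begin{align*}
\sum_{n=2}^{m}(-1)^n \bigl(P(m-1, n-1) + P(m-1, n)\bigr) = 1.
\end{align*}

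First I would split this into two separate sums. In the first one, $\sum_{n=2}^{m}(-1)^n P(m-1,n-1)$, I substitute $k = n-1$ to obtain $\sum_{k=1}^{m-1}(-1)^{k+1} P(m-1,k)$. The second sum stays as $\sum_{n=2}^{m}(-1)^n P(m-1,n)$. The key observation is then that, for each index $k$ with $2\leq k\leq m-1$, the two sums contribute $(-1)^{k+1}P(m-1,k)$ and $(-1)^{k}P(m-1,k)$, which cancel in pairs.

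The only surviving terms are the two boundary contributions, namely the $k=1$ term from the reindexed first sum and the $n=m$ term from the second sum. These give
\begin{align*}
P(m-1,1) + (-1)^m P(m-1,m) = 1 + 0 = 1,
\end{align*}
since the unique decomposition into one block yields $P(m-1,1)=1$, and $P(m-1,m)=0$ because one cannot split $m-1$ elements into $m$ nonempty subsets. There is really no obstacle here: the only step requiring mild care is to choose the reindexing so that the telescoping is visible, and to note that the hypothesis $m\geq 3$ makes the cancellation range $2\leq k\leq m-1$ nonempty (the degenerate case $m=3$ leaves only a single cancelling pair, and then the two boundary terms supply the $1$). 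Unlike Lemma 5.1, the proof does not require the logarithm-exponential generating-function identity, because the alternating sum of ordered set partitions admits this immediate combinatorial cancellation.
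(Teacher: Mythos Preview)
Your proof is correct and follows essentially the same telescoping argument as the paper's own proof: the paper also shifts the index in the first sum so that consecutive terms cancel, isolates the boundary contributions $P(m-1,1)=1$ and $P(m-1,m)=0$, and arrives at $1$. Your write-up is simply a more explicit version of the paper's two-line computation.
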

\begin{proof}
The formula (5.9) follows from a simple computation as follow
\begin{align}
&\sum_{n\geq 2}^{m}(-1)^n
\left(\sum_{\cup_{i=1}^{n-1}S_{i}=\{1,..,m-1\}}1+\sum_{\cup_{i=1}^{n}S_{i}=\{1,..,m-1\}}1\right)\\\nonumber
&=1+\sum_{k=2}^{m-1}(-1)^k\left(\sum_{\cup_{i=1}^{k}S_{i}=\{1,..,m-1\}}1-\sum_{\cup_{i=1}^{k}S_{i}=\{1,..,m-1\}}1\right)+0\\\nonumber
&=1
\end{align}
\end{proof}

\begin{lemma}
Suppose $m\geq 1$,
\begin{align}
\sum_{n=1}^{m}(-1)^n\sum_{\cup_{i=1}^{n}S_{i}=\{1,..,m\}}1=(-1)^m
\end{align}
\end{lemma}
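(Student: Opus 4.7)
The quantity $N(m,n) := \sum_{\cup_{i=1}^n S_i=\{1,\dots,m\}} 1$ counts the ordered set partitions of $\{1,\dots,m\}$ into exactly $n$ nonempty blocks (so $N(m,n)=0$ whenever $n>m$, which lets me freely extend the sum on the left-hand side to $n\geq 1$). The plan is therefore to compute the generating series $\sum_{m\geq 1}\Bigl(\sum_{n\geq 1}(-1)^n N(m,n)\Bigr)\frac{x^m}{m!}$ in closed form and read off its coefficient at $x^m$.

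The key input is the standard identity
\begin{equation*}
\sum_{m\geq 1} N(m,n)\,\frac{x^m}{m!} \;=\; (e^x-1)^n,
\end{equation*}
which is an immediate instance of the exponential formula: an ordered partition into $n$ blocks is an ordered $n$-tuple of nonempty subsets of the label set, and a single nonempty subset has exponential generating function $e^x-1$. This reduces the whole problem to manipulating a geometric series in the variable $u=e^x-1$ inside the formal power series ring $\mathbb{Q}[\![x]\!]$; since $u$ has zero constant term, the geometric sum converges formally.

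Summing over $n\geq 1$ with the alternating sign gives
\begin{equation*}
\sum_{n\geq 1}(-1)^n(e^x-1)^n \;=\; \frac{-(e^x-1)}{1+(e^x-1)} \;=\; \frac{1-e^x}{e^x} \;=\; e^{-x}-1.
\end{equation*}
Extracting the coefficient of $x^m/m!$ on both sides, for $m\geq 1$ the right-hand side contributes exactly $(-1)^m$, which is the claimed identity.

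I do not expect a serious obstacle; the only point worth being careful about is the interchange of the sum over $n$ with the coefficient extraction, which is legitimate because for each fixed $m$ only finitely many $n$ contribute (namely $1\leq n\leq m$). If a purely combinatorial argument were preferred one could instead verify the recursion $N(m,n)=n\bigl(N(m-1,n-1)+N(m-1,n)\bigr)$ and induct on $m$, but the generating function route above is shorter and matches the spirit of Lemma 5.2.
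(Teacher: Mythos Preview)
Your proof is correct and follows essentially the same approach as the paper: both arguments compute the exponential generating function by expanding $\dfrac{1}{1+(e^x-1)}=e^{-x}$ as a geometric series in $e^x-1$ and then read off the coefficient of $x^m/m!$. The only cosmetic difference is that the paper first rewrites the inner count as a sum over partitions $\lambda\vdash m$ (mirroring its proof of Lemma~5.1) before performing the same generating-function manipulation, whereas you work directly with the EGF identity $\sum_{m\ge 1}N(m,n)\,x^m/m!=(e^x-1)^n$; your route is slightly more streamlined but not substantively different.
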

\begin{proof}
Using the same method as in Lemma 5.1, the identity we
want to prove can be reduced to
\begin{align}
\sum_{\lambda\in\{\lambda|\lambda\vdash m, \ell(\lambda)\geq 1\}
}(-1)^{\ell(\lambda)}\frac{\ell(\lambda)!m!}{\lambda_{1}!\cdot\cdot\lambda_{\ell(\lambda)}!|Aut(\lambda)|}=(-1)^m.
\end{align}
By a straight computation
\begin{eqnarray*}
\exp(-t)&=&\frac{1}{\exp(t)}\\
&=&\frac{1}{1+(\sum_{n\geq 1}^{\infty}\frac{t^n}{n!})}\\
&=&\sum_{k=0}^{\infty}(-1)^k(\sum_{n\geq 1}^{\infty}\frac{t^n}{n!})^k\\
&=& 1+\sum_{k=1}^{\infty}(-1)^k(\sum_{n\geq 1}^{\infty}\frac{t^n}{n!})^k\\
&=&1+\sum_{k=1}^{\infty}(-1)^k
\sum_{\substack{\{k_1,k_2,\cdot\cdot\cdot\}\\ \sum_{i}k_i=k}}
\binom{k}{k_{1},k_{2},...}\prod_{i\geq1}\frac{t^{ik_{i}}}{(i!)^{k_{i}}}\\
&=& 1+\sum_{m\geq 1}\;\sum_{\lambda\in\{\lambda|\lambda\vdash m,
\ell(\lambda)\geq 1\}
}\frac{(-1)^{\ell(\lambda)}\ell(\lambda)!t^m}{\lambda_{1}!\cdot\cdot\lambda_{\ell(\lambda)}!|Aut(\lambda)|}
\end{eqnarray*}
Note that, we also have the expansion
\begin{align}
\exp(-t)=1+\sum_{m\geq 1}\frac{(-1)^m t^m}{m!}
\end{align}
Comparing the coefficients of $t^m$ on both sides,
\begin{align}
\sum_{\lambda\in\{\lambda|\lambda\vdash m, \ell(\lambda)\geq 1\}
}\frac{(-1)^{\ell(\lambda)}\ell(\lambda)!}{\lambda_{1}!\cdot\cdot\lambda_{\ell(\lambda)}!|Aut(\lambda)|}
=\frac{(-1)^m}{m!}
\end{align}
Thus
\begin{align}
\sum_{\lambda\in\{\lambda|\lambda\vdash m, \ell(\lambda)\geq 1\}
}\frac{(-1)^{\ell(\lambda)}\ell(\lambda)!m!}{\lambda_{1}!\cdot\cdot\lambda_{\ell(\lambda)}!|Aut(\lambda)|}=(-1)^m
\end{align}
Hence the identity holds.
\end{proof}

\begin{lemma}
For $n\geq 1$,
\begin{align}
\sum_{k=0}^{n-1}(-1)^{k}(k+1)\binom{n}{k}=(-1)^{n+1}(n+1)
\end{align}
\end{lemma}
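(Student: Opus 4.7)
The plan is to read the identity off a single generating-function manipulation, rather than to run an induction on $n$. The starting point is the binomial expansion $(1-x)^n=\sum_{k=0}^{n}(-1)^k\binom{n}{k}x^k$; since the target sum weights each term by $(k+1)$, I produce those weights by combining this series with its derivative.

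Concretely, differentiating $(1-x)^n$ and then multiplying by $x$ gives the auxiliary identity $\sum_{k=0}^{n}(-1)^k k\binom{n}{k}x^k=-nx(1-x)^{n-1}$. Adding this to the plain binomial series collapses to
\[
\sum_{k=0}^{n}(-1)^k(k+1)\binom{n}{k}\,x^k \;=\; (1-x)^{n-1}\bigl[1-(n+1)x\bigr].
\]
Evaluating at $x=1$ annihilates the right-hand side whenever $n\geq 2$, so $\sum_{k=0}^{n}(-1)^k(k+1)\binom{n}{k}=0$. Isolating the $k=n$ term, which equals $(-1)^n(n+1)$, and moving it to the other side yields precisely $\sum_{k=0}^{n-1}(-1)^k(k+1)\binom{n}{k}=(-1)^{n+1}(n+1)$, as required.

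There is essentially no obstacle once the generating function is in hand; the proof is a one-line substitution. The only delicate point is the boundary case $n=1$, where $(1-x)^{n-1}=1$ does not vanish at $x=1$ and the displayed identity has to be inspected by hand; this is harmless for the application in Section~4, since Lemma~5.4 is invoked in the derivation of (4.24) only with parameter $n=L-1\geq 2$. As an independent cross-check I would also record the purely algebraic route via the splitting $(k+1)\binom{n}{k}=n\binom{n-1}{k-1}+\binom{n}{k}$, after which each piece is handled by the standard truncated alternating binomial sum $\sum_{j=0}^{m-1}(-1)^j\binom{m}{j}=(-1)^{m+1}$ and the two contributions combine to the same closed form.
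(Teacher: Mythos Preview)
Your argument is correct and takes a route different from the paper's. The paper proceeds purely algebraically: it splits $(k+1)\binom{n}{k}=\binom{n}{k}+k\binom{n}{k}=\binom{n}{k}+n\binom{n-1}{k-1}$, reindexes, and applies the truncated alternating sum $\sum_{j=0}^{m-1}(-1)^j\binom{m}{j}=(-1)^{m+1}$ to each piece---precisely the cross-check you sketch in your final sentence. Your primary argument instead packages everything into the generating-function identity $\sum_{k=0}^n(-1)^k(k+1)\binom{n}{k}x^k=(1-x)^{n-1}\bigl(1-(n+1)x\bigr)$ and evaluates at $x=1$, which makes the vanishing of the full sum immediate for $n\geq 2$ and avoids any reindexing. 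Both proofs are of comparable length; yours has the minor advantage that the closed form on the right explains at a glance why the full sum vanishes.

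You are also right to flag the boundary $n=1$: the identity actually \emph{fails} there (the left side equals $1$ while the right side equals $2$), so the hypothesis ought to read $n\geq 2$. This does not affect the application, since in (4.24) the lemma is invoked with $n=L-1$ and the induction in Section~4.2 runs with $L\geq 3$.
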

\begin{proof}
This is done by straightforward computations
\begin{align}
\sum_{k=0}^{n-1}(-1)^{k}(k+1)\binom{n}{k}&=\sum_{k=0}^{n-1}(-1)^{k}\binom{n}{k}+\sum_{k=1}^{n-1}(-1)^{k}k\binom{n}{k}\\\nonumber
&=\sum_{k=0}^{n-1}(-1)^{k}\binom{n}{k}+\sum_{k=1}^{n-1}(-1)^{k}n\binom{n-1}{k-1}\\\nonumber
&=\sum_{k=0}^{n-1}(-1)^{k}\binom{n}{k}+n\sum_{k=0}^{n-2}(-1)^{k+1}\binom{n-1}{k}\\\nonumber
&=-(-1)^n\binom{n}{n}+n(-1)^{n-1}\binom{n-1}{n-1}\\\nonumber
&=(-1)^{n+1}(n+1)
\end{align}
\end{proof}

\appendix
\renewcommand{\appendixname}{Appendix~\Alph{section}}

\section{The classical approach to the Lickorish-Millett type formulas}
In this section, for the reader's convenience, we give the classical
approach to Theorem 1.4 and Theorem 1.5 as shown in \cite{LM} and
\cite{KM} with a slightly different method from that used in
\cite{CC} to study the Lickorish-Millett type formulas for Kauffman
polynomials. By the formula (1.5), in fact, we only need to prove
the following expressions for $h_{-L}^\mathcal{L}(t)$ and
$h_{2-L}^{\mathcal{L}}(t)$:
\begin{lemma}
For a link $\mathcal{L}$ with $L$ components, the first and second
coefficient polynomials $h_{-L}^\mathcal{L}(t)$ and
$h_{2-L}^{\mathcal{L}}(t)$ are given by
\begin{align}
h_{-L}^{\mathcal{L}}(t)=\prod_{\alpha=1}^{L}h_{-1}^{\mathcal{K}_\alpha}(t).
\end{align}
\begin{align}
h_{2-L}^{\mathcal{L}}(t)=\sum_{1\leq \beta<\gamma \leq
L}h_{0}^{\mathcal{L}_{\beta,\gamma}}(t)\prod_{\alpha\neq
\beta,\gamma}h_{-1}^{\mathcal{K}_\alpha}(t)-(L-2)\sum_{\beta=1}^Lh_{1}^{\mathcal{K}_\beta}(t)\prod_{\alpha\neq
\beta}h_{-1}^{\mathcal{K}_\alpha}(t).
\end{align}
\end{lemma}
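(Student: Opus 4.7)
The plan is to prove both identities of Lemma A.1 by reducing an arbitrary link $\mathcal{L}$ to a split link of its knot components through mixed crossing changes, and tracking the effect of each such change on the two sides of the formulas.

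For (A.1), I would first argue that $h_{-L}^{\mathcal{L}}(t)$ is invariant under mixed crossing changes. Comparing coefficients of $z^{-L}$ in the skein relation (2.1) shows
\[
h_{-L}^{\mathcal{L}_+}(t) - h_{-L}^{\mathcal{L}_-}(t) = 0,
\]
because when $\mathcal{L}_0$ has $L-1$ components the expansion of $z\,\mathcal{H}(\mathcal{L}_0)$ begins at $z^{2-L}$, so the $z^{-L}$ contribution vanishes. Iterating mixed crossing changes reduces $\mathcal{L}$ to a split link $\tilde{\mathcal{L}}=\sqcup_\alpha \mathcal{K}_\alpha$ of the same components, and on such a split link the multiplicativity $\mathcal{H}(\tilde{\mathcal{L}})=\prod_\alpha \mathcal{H}(\mathcal{K}_\alpha)$ recorded in Section 3 directly yields the $z^{-L}$ coefficient $\prod_\alpha h_{-1}^{\mathcal{K}_\alpha}(t)$.

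For (A.2), I would repeat the strategy, but now both sides transform non-trivially under mixed crossing changes; the goal is to show that the two transformations agree. At a crossing between $\mathcal{K}_\beta$ and $\mathcal{K}_\gamma$, extracting the $z^{2-L}$ coefficient from the skein relation yields
\[
h_{2-L}^{\mathcal{L}_+}(t) - h_{2-L}^{\mathcal{L}_-}(t) = h_{-(L-1)}^{\mathcal{L}_0}(t),
\]
and applying (A.1) to the $(L-1)$-component link $\mathcal{L}_0$ rewrites the right side as $h_{-1}^{\mathcal{K}_{\beta\cup\gamma}}(t)\prod_{\alpha\neq\beta,\gamma}h_{-1}^{\mathcal{K}_\alpha}(t)$, where $\mathcal{K}_{\beta\cup\gamma}$ is the merged knot produced by smoothing the crossing. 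On the right-hand side of (A.2), the individual invariants $h_{\pm 1}^{\mathcal{K}_\alpha}(t)$ are unaffected since a mixed crossing change preserves the isotopy type of each component, and among the sublink invariants $h_0^{\mathcal{L}_{\beta'\gamma'}}(t)$ only the one with $\{\beta',\gamma'\}=\{\beta,\gamma\}$ is affected; the two-component skein calculation gives $h_0^{\mathcal{L}_{\beta\gamma,+}}(t)-h_0^{\mathcal{L}_{\beta\gamma,-}}(t)=h_{-1}^{\mathcal{K}_{\beta\cup\gamma}}(t)$, so the total change of the right-hand side matches that of the left-hand side exactly.

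It remains to verify (A.2) on the split link $\tilde{\mathcal{L}}$. Expanding $\prod_\alpha \mathcal{H}(\mathcal{K}_\alpha)$, the $z^{2-L}$ coefficient comes from picking exactly one factor to contribute its $z^1$ term, giving $h_{2-L}^{\tilde{\mathcal{L}}}(t)=\sum_\beta h_1^{\mathcal{K}_\beta}(t)\prod_{\alpha\neq\beta}h_{-1}^{\mathcal{K}_\alpha}(t)$. For the right-hand side of (A.2), the two-component split value $h_0^{\tilde{\mathcal{L}}_{\beta\gamma}}(t)=h_1^{\mathcal{K}_\beta}(t)h_{-1}^{\mathcal{K}_\gamma}(t)+h_{-1}^{\mathcal{K}_\beta}(t)h_1^{\mathcal{K}_\gamma}(t)$, summed over $\beta<\gamma$, produces $(L-1)\sum_\beta h_1^{\mathcal{K}_\beta}(t)\prod_{\alpha\neq\beta}h_{-1}^{\mathcal{K}_\alpha}(t)$; subtracting $(L-2)\sum_\beta h_1^{\mathcal{K}_\beta}(t)\prod_{\alpha\neq\beta}h_{-1}^{\mathcal{K}_\alpha}(t)$ leaves precisely one copy, matching the left-hand side. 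The main bookkeeping challenge will be managing the merged-knot analysis in the crossing-change step without conflating distinct crossings, and making sure the elementary identity $(L-1)-(L-2)=1$ is invoked cleanly; neither is a genuine obstacle once the framework is set up.
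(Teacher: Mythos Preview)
Your proposal is correct and follows the same underlying mechanism as the paper's appendix proof---reduce $\mathcal{L}$ to a split link via mixed crossing changes and track coefficients through the skein relation---but the organization for (A.2) is genuinely different and somewhat cleaner. The paper separates one distinguished component $\mathcal{K}_{L+1}$ from the rest one crossing at a time, derives the recursion (A.22) expressing $h_{1-L}^{\mathcal{L}}(t)$ in terms of $h_{2-L}^{\mathcal{L}_{1,\dots,L}}(t)$ together with two-component data, and then closes the argument by induction on $L$. You instead show directly that the difference between the two sides of (A.2) is invariant under \emph{any} mixed crossing change (since both sides jump by $h_{-1}^{\mathcal{K}_{\beta\cup\gamma}}(t)\prod_{\alpha\neq\beta,\gamma}h_{-1}^{\mathcal{K}_\alpha}(t)$), and then verify equality once on the fully split link using the elementary count $(L-1)-(L-2)=1$. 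Your route avoids the explicit induction on $L$ and the intermediate recursion; the paper's route, on the other hand, shows more clearly how one would \emph{discover} the formula (A.2) in the first place by peeling off components. Both are valid and the bookkeeping you flag is indeed routine.
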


Let $\mathcal{L}$ be the link with $L+1$ components. Without loss of
generality, we consider a negative crossing between two components
$\mathcal{K}_1$ and $\mathcal{K}_{L+1}$. Applying the expansion
(1.3) to the skein relation
\begin{align}
\mathcal{H}(\mathcal{L}_+)-\mathcal{H}(\mathcal{L}_-)=z\mathcal{H}(\mathcal{L}_{10L+1}),
\end{align}
where $\mathcal{L}_-=\mathcal{L}$ and $\mathcal{L}_{10L+1}$ denotes
the link $\mathcal{L}_0$ where the crossing belongs to
$\mathcal{K}_1$ and $\mathcal{K}_{L+1}$.

We have
\begin{align}
&h_{-L-1}^{\mathcal{L}_+}(t)z^{-(L+1)}+h_{1-L}^{\mathcal{L}_+}(t)z^{1-L}+h_{3-L}^{\mathcal{L}_+}(t)z^{3-L}+\cdots\\\nonumber
&-(h_{-L-1}^{\mathcal{L}}(t)z^{-(L+1)}+h_{1-L}^{\mathcal{L}}(t)z^{1-L}+h_{3-L}^{\mathcal{L}}(t)z^{3-L}+\cdots)\\\nonumber
&=z(h_{-L}^{\mathcal{L}_{10L+1}}(t)z^{-L}+h_{2-L}^{\mathcal{L}_{10L+1}}(t)z^{2-L}+h_{4-L}^{\mathcal{L}_{10L+1}}(t)z^{4-L}+\cdots)
\end{align}

Comparing the coefficient of $z$,
\begin{align}
h_{-1-L}^{\mathcal{L}_+}(t)&=h_{-1-L}^{\mathcal{L}}(t)\\
h_{1-L}^{\mathcal{L}_+}(t)-h_{1-L}^{\mathcal{L}}(t)&=h_{-L}^{\mathcal{L}_{10L+1}}(t)\\
h_{3-L}^{\mathcal{L}_+}(t)-h_{3-L}^{\mathcal{L}}(t)&=h_{2-L}^{\mathcal{L}_{10L+1}}(t)\\\nonumber
&\cdots
\end{align}
We divide the proof Lemma A.1 into two steps.

{\bf Step 1:} By using the formula (A.3) recursively, we get
\begin{align}
h_{-1-L}^{\mathcal{L}}(t)=h_{-1-L}^{\mathcal{L}_+}(t)=\cdots
=h_{-1-L}^{\mathcal{L}_{1,2,..,L}\otimes \mathcal{K}_{L+1}}(t),
\end{align}
where $\mathcal{L}_{1,2,..,L}\otimes \mathcal{K}_{L+1}$ denotes the
disjoint union of the links $\mathcal{L}_{1,2,..,L}$ and
$\mathcal{K}_{L+1}$.

Since $\mathcal{H}(\mathcal{L}_{1,2,..,L}\otimes
\mathcal{K}_{L+1})=\mathcal{H}(\mathcal{L}_{1,2,..,L})\mathcal{H}(\mathcal{K}_{L+1})$,
 one has
\begin{align}
&h_{-1-L}^{\mathcal{L}_{1,2,..,L}\otimes
\mathcal{K}_{L+1}}(t)z^{-L-1}+h_{1-L}^{\mathcal{L}_{1,2,..,L}\otimes
\mathcal{K}_{L+1}}(t)z^{1-L}+\cdots\\\nonumber &
=(h_{-L}^{\mathcal{L}_{1,2,..,L}}(t)z^{-L}+h_{2-L}^{\mathcal{L}_{1,2,..,L}}(t)z^{2-L}+\cdots)(h_{-1}^{\mathcal{K}_{L+1}}(t)z^{-1}
+h_{1}^{\mathcal{K}_{L+1}}(t)z^{1}+\cdots)
\end{align}

By comparing the coefficients of $z$,
\begin{align}
h_{-1-L}^{\mathcal{L}_{1,2,..,L}\otimes
\mathcal{K}_{L+1}}(t)&=h_{-1}^{\mathcal{K}_{L+1}}(t)h_{-L}^{\mathcal{L}_{1,2,..,L}}(t)\\
h_{1-L}^{\mathcal{L}_{1,2,..,L}\otimes
\mathcal{K}_{L+1}}(t)&=h_{-L}^{\mathcal{L}_{1,2,..,L}}(t)h_{1}^{\mathcal{K}_{L+1}}(t)+h_{-1}^{\mathcal{K}_{L+1}}(t)h_{2-L}^{\mathcal{L}_{1,2,..,L}}(t)
\end{align}

Combing the formulas (A.8) and  (A.10) recursively,
\begin{align}
h_{-1-L}^{\mathcal{L}}(t)= h_{-1-L}^{\mathcal{L}_{1,2,..,L}\otimes
\mathcal{K}_{L+1}}(t)&=h_{-1}^{\mathcal{K}_{L+1}}(t)h_{-L}^{\mathcal{L}_{1,2,..,L}}(t)=\cdots
=\prod_{\alpha=1}^{L+1}h_{-1}^{\mathcal{K}_\alpha}(t).
\end{align}
So we proved the formula (A.1).

{\bf Step 2:} By using the formula  (A.6) and (A.12) together,
\begin{align}
h_{1-L}^{\mathcal{L}}(t)=h_{1-L}^{\mathcal{L}_+}(t)-h_{-L}^{\mathcal{L}_{10L+1}}(t)=h_{1-L}^{\mathcal{L}_+}(t)-h_{-1}^{\mathcal{K}_{10L+1}}
\prod_{\alpha=2}^Lh_{-1}^{\mathcal{K}_\alpha}(t).
\end{align}

Considering the skein relation
\begin{align}
\mathcal{H}((\mathcal{L}_{1,L+1})_+)-\mathcal{H}(\mathcal{L}_{1,L+1})=z\mathcal{H}((\mathcal{K}_{10L+1}).
\end{align}
\begin{align}
&h_{-2}^{(\mathcal{L}_{1,L+1})_+}(t)z^{-2}+h_{0}^{(\mathcal{L}_{1,L+1})_+}(t)z^{0}+\cdots
-(h_{-2}^{\mathcal{L}_{1,L+1}}(t)z^{-2}+h_{0}^{\mathcal{L}_{1,L+1}}(t)z^{0}+\cdots)\\\nonumber
&=z(h_{-1}^{\mathcal{K}_{10L+1}}(t)z^{-1}+h_{1}^{\mathcal{K}_{10L+1}}(t)z+\cdots).
\end{align}
So we have
\begin{align}
h_{0}^{(\mathcal{L}_{1,L+1})_+}(t)-h_{0}^{\mathcal{L}_{1,L+1}}(t)=h_{-1}^{\mathcal{K}_{10L+1}}(t).
\end{align}
Substituting it in (A.13),
\begin{align}
h_{1-L}^{\mathcal{L}}(t)=h_{1-L}^{\mathcal{L}_+}(t)-(h_{0}^{(\mathcal{L}_{1,L+1})_+}(t)-h_{0}^{\mathcal{L}_{1,L+1}}(t))
\prod_{\alpha=2}^Lh_{-1}^{\mathcal{K}_\alpha}(t)
\end{align}
Therefore,
\begin{align}
h_{1-L}^{\mathcal{L}}(t)-h_{0}^{\mathcal{L}_{1,L+1}}(t)
\prod_{\alpha=2}^Lh_{-1}^{\mathcal{K}_\alpha}(t)=h_{1-L}^{\mathcal{L}_+}(t)-h_{0}^{(\mathcal{L}_{1,L+1})_+}(t)
\prod_{\alpha=2}^Lh_{-1}^{\mathcal{K}_\alpha}(t)
\end{align}
and by using it recursively, we obtain
\begin{align}
h_{1-L}^{\mathcal{L}}(t)-h_{0}^{\mathcal{L}_{1,L+1}}(t)
\prod_{\alpha=2}^Lh_{-1}^{\mathcal{K}_\alpha}(t)=h_{1-L}^{\mathcal{L}^{(1)(L+1)}}(t)-h_{0}^{\mathcal{K}_1\otimes
\mathcal{K}_{L+1}}(t)
\prod_{\alpha=2}^Lh_{-1}^{\mathcal{K}_\alpha}(t)
\end{align}
where the notation $\mathcal{L}^{(1)(L+1)}$ denotes the two
components $\mathcal{K}_1$ and $\mathcal{K}_{L+1}$ in $\mathcal{L}$
are unlinked.

Similarly, we apply the above procedure to a crossing between
components $\mathcal{K}_2$ and $\mathcal{K}_{L+1}$ in
$\mathcal{L}^{(1)(L+1)}$, we also have
\begin{align}
h_{1-L}^{\mathcal{L}^{(1)(L+1)}}(t)-h_{0}^{\mathcal{L}_{2,L+1}}(t)
\prod_{\alpha=1,\alpha\neq
2}^Lh_{-1}^{\mathcal{K}_\alpha}(t)=h_{1-L}^{\mathcal{L}^{(12)(L+1)}}(t)-h_{0}^{\mathcal{K}_2\otimes
\mathcal{K}_{L+1}}(t) \prod_{\alpha=1,\alpha\neq
2}^Lh_{-1}^{\mathcal{K}_\alpha}(t),
\end{align}
where the notation $\mathcal{L}^{(12)(L+1)}$ denotes $\mathcal{K}_1,
\mathcal{K}_2$ are unlinked with $\mathcal{K}_{L+1}$ in
$\mathcal{L}$.

Recursively,
\begin{align}
h_{1-L}^{\mathcal{L}}(t)&=h_{1-L}^{\mathcal{L}^{(1)(L+1)}}(t)+h_{0}^{\mathcal{L}_{1,L+1}}(t)
\prod_{\alpha=2}^Lh_{-1}^{\mathcal{K}_\alpha}(t)-h_{0}^{\mathcal{K}_1\otimes
\mathcal{K}_{L+1}}(t)
\prod_{\alpha=2}^Lh_{-1}^{\mathcal{K}_\alpha}(t)\\\nonumber
&=h_{1-L}^{\mathcal{L}^{(12)(L+1)}}(t)+h_{0}^{\mathcal{L}_{2,L+1}}(t)
\prod_{\alpha=1,\alpha\neq
2}^Lh_{-1}^{\mathcal{K}_\alpha}(t)+h_{0}^{\mathcal{L}_{1,L+1}}(t)
\prod_{\alpha=2}^Lh_{-1}^{\mathcal{K}_\alpha}(t)\\\nonumber
&-h_{0}^{\mathcal{K}_1\otimes \mathcal{K}_{L+1}}(t)
\prod_{\alpha=2}^Lh_{-1}^{\mathcal{K}_\alpha}(t)-h_{0}^{\mathcal{K}_2\otimes
\mathcal{K}_{L+1}}(t) \prod_{\alpha=1,\alpha\neq
2}^Lh_{-1}^{\mathcal{K}_\alpha}(t)\\\nonumber &=\cdots\\\nonumber
&=h_{1-L}^{\mathcal{L}_{1,2,..,L}\otimes
\mathcal{K}_{L+1}}(t)+\sum_{\beta=1}^L\left(h_0^{\mathcal{L}_{\beta,L+1}}(t)-h_0^{\mathcal{K}_\beta\otimes
\mathcal{K}_{L+1}}(t)\right)\prod_{\alpha=1,\alpha\neq
\beta}^Lh_{-1}^{\mathcal{K}_\alpha}(t)
\end{align}

Applying the formulas (A.10) and (A.11) to (A.21), we finally obtain
\begin{align}
h_{1-L}^{\mathcal{L}}(t)&=h_{2-L}^{\mathcal{L}_{1,2,..,L}}(t)h_{-1}^{\mathcal{K}_{L+1}}(t)+
\sum_{\beta=1}^Lh_{0}^{\mathcal{L}_{\beta,L+1}}(t)\prod_{\alpha=1,\alpha\neq
\beta}^Lh_{-1}^{\mathcal{K}_\alpha}(t)\\\nonumber
&-(L-1)h_{1}^{\mathcal{K}_{L+1}}(t)\prod_{\alpha=1}^Lh_{-1}^{\mathcal{K}_\alpha}(t)-\sum_{\beta=1}^Lh_{1}^{\mathcal{K}_\beta}(t)
\prod_{\alpha=1,\alpha\neq\beta}^{L+1}h_{-1}^{\mathcal{K}_\alpha}(t).
\end{align}

Let us consider small $L$ cases. For $L=2$, formula (A.22) gives
\begin{align}
h_{-1}^{\mathcal{L}_{1,2,3}}(t)&=h_{0}^{\mathcal{L}_{1,2}}(t)h_{-1}^{\mathcal{K}_3}(t)+h_{0}^{\mathcal{L}_{1,3}}(t)h_{-1}^{\mathcal{K}_2}(t)
+h_{0}^{\mathcal{L}_{2,3}}(t)h_{-1}^{\mathcal{K}_1}(t)\\\nonumber
&-h_{1}^{\mathcal{K}_3}(t)h_{-1}^{\mathcal{K}_1}(t)h_{-1}^{\mathcal{K}_2}(t)-h_{1}^{\mathcal{K}_2}(t)h_{-1}^{\mathcal{K}_3}(t)h_{-1}^{\mathcal{K}_1}(t)
-h_{1}^{\mathcal{K}_1}(t)h_{-1}^{\mathcal{K}_2}(t)h_{-1}^{\mathcal{K}_3}(t)
\end{align}

For $L=3$, we obtain
\begin{align}
h_{-2}^{\mathcal{L}_{1,2,3,4}}(t)&=h_{-1}^{\mathcal{L}_{1,2,3}}(t)h_{-1}^{\mathcal{K}_4}(t)+h_{0}^{\mathcal{L}_{1,4}}(t)h_{-1}^{\mathcal{K}_2}(t)
h_{-1}^{\mathcal{K}_3}(t)\\\nonumber
&+h_{0}^{\mathcal{L}_{2,4}}(t)h_{-1}^{\mathcal{K}_1}(t)
h_{-1}^{\mathcal{K}_3}(t)+h_{0}^{\mathcal{L}_{3,4}}(t)h_{-1}^{\mathcal{K}_2}(t)
h_{-1}^{\mathcal{K}_1}(t)\\\nonumber &-2h_{1}^{\mathcal{K}_{4}}(t)
h_{-1}^{\mathcal{K}_1}(t)h_{-1}^{\mathcal{K}_2}(t)h_{-1}^{\mathcal{K}_3}(t)-h_{1}^{\mathcal{K}_{1}}(t)
h_{-1}^{\mathcal{K}_2}(t)h_{-1}^{\mathcal{K}_3}(t)h_{-1}^{\mathcal{K}_4}(t)\\\nonumber
&-h_{1}^{\mathcal{K}_{2}}(t)
h_{-1}^{\mathcal{K}_1}(t)h_{-1}^{\mathcal{K}_3}(t)h_{-1}^{\mathcal{K}_4}(t)-h_{1}^{\mathcal{K}_{3}}(t)
h_{-1}^{\mathcal{K}_1}(t)h_{-1}^{\mathcal{K}_2}(t)h_{-1}^{\mathcal{K}_4}(t)
\end{align}
Substituting (A.23) in the above formula,
\begin{align}
h_{-2}^{\mathcal{L}_{1,2,3,4}}(t)=\sum_{1\leq \beta<\gamma \leq
4}h_{0}^{\mathcal{L}_{\beta,\gamma}}(t)\prod_{\alpha\neq
\beta,\gamma}h_{-1}^{\mathcal{K}_\alpha}(t)-2\sum_{\beta}h_{1}^{\mathcal{K}_\beta}(t)\prod_{\alpha\neq
\beta}h_{-1}^{\mathcal{K}_\alpha}(t).
\end{align}
From the formulae (A.23) and (A.24) for $L=2,3$,  we guess the
general form as follow:
\begin{align}
h_{2-L}^{\mathcal{L}_{1,2,..,L}}(t)=\sum_{1\leq \beta<\gamma \leq
L}h_{0}^{\mathcal{L}_{\beta,\gamma}}(t)\prod_{\alpha\neq
\beta,\gamma}h_{-1}^{\mathcal{K}_\alpha}(t)-(L-2)\sum_{\beta}h_{1}^{\mathcal{K}_\beta}(t)\prod_{\alpha\neq
\beta}h_{-1}^{\mathcal{K}_\alpha}(t).
\end{align}
We can prove it by induction. In fact, for $L+1$, by the formula
(A.22) and the induction hypothesis,
\begin{align}
h_{1-L}^{\mathcal{L}}(t)&=\sum_{1\leq \beta <\gamma\leq
L}h_{0}^{\mathcal{L}_{\beta,\gamma}}(t)\prod_{\alpha\neq\beta,\gamma}h_{-1}^{\mathcal{K}_\alpha}(t)+\sum_{\beta=1}^L
h_{0}^{\mathcal{L}_{\beta,L+1}}(t)\prod_{\alpha\neq
\beta}h_{-1}^{\mathcal{K}_\alpha}(t)\\\nonumber
&-(L-1)h_{1}^{\mathcal{K}_{L+1}}(t)\prod_{\alpha=1}^Lh_{-1}^{\mathcal{K}_\alpha}(t)-(L-1)\sum_{\beta=1}^Lh_{1}^{\mathcal{K}_\beta}(t)
\prod_{\alpha=1,\alpha\neq
\beta}^{L+1}h_{-1}^{\mathcal{K}_\alpha}(t)\\\nonumber &= \sum_{1\leq
\beta<\gamma \leq
L+1}h_{0}^{\mathcal{L}_{\beta,\gamma}}(t)\prod_{\alpha\neq
\beta,\gamma}h_{-1}^{\mathcal{K}_\alpha}(t)-(L-1)\sum_{\beta=1}^{L+1}h_{1}^{\mathcal{K}_\beta}(t)\prod_{\alpha\neq
\beta}h_{-1}^{\mathcal{K}_\alpha}(t).
\end{align}
Therefore, we complete the proof of the formula (A.2) in Lemma A.1.

\clearpage

\end{document}